\newtheorem{theorem}{Theorem}
\newtheorem{lemma}{Lemma}
\newtheorem{proposition}{Proposition}
\newtheorem{corollary}{Corollary}
\newtheorem{claim}{Claim}
\newcommand{\qq}{\quad\quad}
\newcommand{\nab}{\langle\nabla\rangle}
\newcommand{\naba}{\nab^\al}
\newcommand{\nabam}{\nab^{-\al}}
\newcommand{\f}[2]{\frac{#1}{#2}}
\newcommand{\al}{\alpha}
\newcommand{\be}{\beta}
\newcommand{\ga}{\gamma}
\newcommand{\de}{\delta}
\newcommand{\ve}{\varepsilon}
\newcommand{\vp}{\varphi}
\newcommand{\mw}{\mathbf w}
\newcommand{\rn}{{\mathbf R}^n}
\newcommand{\rone}{\mathbf R^1}
\newcommand{\rtwo}{\mathbf R^2}
\newcommand{\rthree}{\mathbf R^3}
\newcommand{\zz}{\mathbf Z}
\newcommand{\cn}{{\mathcal N}}
\newcommand{\cf}{\mathcal F}
\newcommand{\ch}{\mathcal H}
\newcommand{\cx}{\mathcal X}
\newcommand{\p}{\partial}
\newcommand{\med}{\textnormal{med}}
\newcommand{\beq}{\begin{equation}}
\newcommand{\eeq}{\end{equation}}
\newcommand{\beqna}{\begin{eqnarray*}}
\newcommand{\eeqna}{\end{eqnarray*}}
\newcommand{\beqn}{\begin{equation*}}
\newcommand{\eeqn}{\end{equation*}}
\newcommand{\bp}{\begin{proof}}
\newcommand{\ep}{\end{proof}}
\newcommand{\bprop}{\begin{proposition}}
\newcommand{\eprop}{\end{proposition}}
\newcommand{\bt}{\begin{theorem}}
\newcommand{\et}{\end{theorem}}
\newcommand{\bex}{\begin{Example}}
\newcommand{\eex}{\end{Example}}
\newcommand{\bc}{\begin{corollary}}
\newcommand{\ec}{\end{corollary}}
\newcommand{\bcl}{\begin{claim}}
\newcommand{\ecl}{\end{claim}}
\newcommand{\bl}{\begin{lemma}}
\newcommand{\el}{\end{lemma}}
\begin{document}

\title
[l.w.p. for quadratic Schr\"odinger equations in ${\mathbf R^{1+1}}$]
{Local well-posedness for quadratic Schr\"odinger equations in $\mathbf{R^{1+1}}$: a normal form approach}

\author{Seungly Oh, Atanas Stefanov}

\address{Seungly Oh, 
405, Snow Hall, 1460 Jayhawk Blvd. , 
Department of Mathematics,
University of Kansas,
Lawrence, KS~66045, USA}
\address{Atanas Stefanov, 405, Snow Hall, 1460 Jayhawk Blvd. , 
Department of Mathematics,
University of Kansas,
Lawrence, KS~66045, USA}
\date{\today}

\thanks{Oh and Stefanov are partially supported by  NSF-DMS 0908802 }

\subjclass[2000]{35Q55 (35A07, 35B30, 35B45,  35R05)}

\keywords{Schr\"odinger equations, normal forms}

\begin{abstract}
For the Schr\"odinger equation $u_t+i u_{xx}=\nab^\be[u^2]$, $\be\in (0,1/2)$, we establish local well-posedness in $H^{\be-1+}$ (note that if $\be=0$, 
this matches, up to an endpoint,  the sharp result of Bejenaru-Tao, \cite{BT}). Our approach differs significantly from the previous ones in that we use normal form transformation to analyze the worst interacting terms in the nonlinearity and then show that the remaining terms are (much) smoother. In particular, 
this allows us to conclude that $u-e^{-i t \p_x^2} u(0)\in H^{-\f{1}{2}}(\rone)$, even though $u(0)\in H^{\be-1+}$. 

 In addition and as a byproduct of our normal form analysis, we obtain a Lipschitz continuity property in $H^{-\f{1}{2}}$ 
of the solution operator (which originally acts on $H^{\be-1+}$), which is new even in the case $\be=0$. As an easy 
corollary, we obtain local well-posedness results for $u_t+ i u_{xx} =   [\nab^{\beta} u]^2$.  

Finally, we sketch an approach to obtain similar statements for the equations $u_t+i u_{xx}=\nab^\be[u\bar{u}]$ and $u_t+i u_{xx}=\nab^\be[\bar{u}^2]$. 
\end{abstract}

\maketitle
\date{today}

\section{Introduction}
In this paper, we will be concerned with local solutions of the quadratic  Schr\"odinger equations  
\begin{equation}
\label{s:1}
\left| 
\begin{array}{l}
u_t+ i u_{xx}= Q(u, u): \qq (t,x)\in \rone_+\times \rone\\
u(0,x)=u_0  
\end{array}\right.
\end{equation}
The problem has received a lot of attention in the last twenty years and a full account of the appropriate results  and open questions is beyond of the scope of 
the current project. We will however outline a selected list of recent works, which has some  bearing on the problem that we are   studying. 

The classical results of the subject go back to Tsutsumi, \cite{Tsu}, which establishes local well-posedness for data in $H^s, s\geq 0$ for all quadratic nonlinearities (i.e. 
$|Q(u,u)|\leq C|u|^2$). This is in a way optimal, since for Hamiltonian models (i.e. with $Q(e^{i \theta} u, e^{i \theta} u)=e^{i \theta} Q(u,u)$), it is well-known that there is ill-posedness in $H^s, s<0$ - this is in the work of Kenig-Ponce-Vega, \cite{CPV2}, see also Christ, \cite{christ} and Christ-Colliander-Tao, \cite{CCT}  
for further results in this direction. 

For the non-Hamiltonian model, several different models have been considered in the literature, the most popular being  $Q(u,u)=u^2, u\bar{u}, \bar{u}^2$. Each of these comes with its own specifics and the corresponding local well-posedness results reflect that.  Regarding the cases $u^2, \bar{u}^2$, it has been show by Kenig-Ponce-Vega \cite{CPV2}, that these are well posed in $H^{-\f{3}{4}+}$ by means of bilinear estimates in $X^{s,b}$ spaces. Moreover, such bilinear estimates necessarily fail at the critical index $-3/4$, \cite{CPV2,NTT}. Regarding the nonlinearity $u\bar{u}$, it has been shown, that the problem is well-posed in $H^{-\f{1}{4}+}$, \cite{CPV2}  and this turns out to be sharp\footnote{At least as far as the uniform continuity of the solution map goes} ,\cite{LW}. On the other hand, the results for $u\bar{u}$ may be pushed down to the really sharp index $s=-1/2+$, if one is willing to put some homogeneous Sobolev space requirements on the low-frequency portion of the data, \cite{LW}

Regarding the nonlinearity $u^2$, the results of \cite{CPV2} were 
extended to the sharp index $s\geq -1$,by Bejenaru-Tao  \cite{BT}, see also the work  Bejenaru-Da Silva, \cite{BS} for the same result in two spatial dimensions. As we have mentioned, the spaces $X^{s,b}$ by themselves, could not accommodate such low regularity of solutions, so the authors had to come up with  further refinements of these spaces, in which they were able to preform their fixed point arguments, see also Nishimoto, \cite{Nishimoto2} for interesting commentary on these developments. For the nonlinearity $\bar{u}^2$, it has been shown that similar techniques may be used to obtain l.w.p. in $h^s, s\geq -1$, Nishimoto, \cite{Nishimoto1}. 
It also should be noted that in all of these papers (with the exception of  \cite{Nishimoto2}), it is 
hard to show optimal l.w.p. for Schr\"odinger equations with 
nonlinearity of the form $Q(u,u)=c_1 u^2 +c_2 \bar{u}^2$, due to the specifics of the approaches. The result in \cite{Nishimoto2} achieves this goal, at the expense of further layer of complexity, involved in the definition of the spaces and the corresponding bilinear estimates that need to be shown.   

Our main result concerns the following specific 
 generalization of the quadratic Schr\"odinger equation \eqref{s:1} 
\begin{equation}
\label{1}
\left| 
\begin{array}{l}
u_t+ i u_{xx} = \nab^{\beta} [u^2]: \qq (t,x)\in \rone_+\times \rone\\
u(0,x)=u_0 \in H^{-\alpha},
\end{array}\right.
\end{equation}
where $\be\geq 0$. 
This model has been well-studied over the years, mainly the case $\be=1$. We should first mention, that the corresponding equation is ill-posed, in the sense that the flow map $u_0\to u$ experiences norm inflation, Christ \cite{christ}, 
see also \cite{CCT} for related results.In the work of Stefanov, \cite{stefanov}  
existence of weak solutions in $H^1$ was shown, under the additional smallness requirement $\sup_x |\int_{-\infty}^x u_0(y) dy| <<1$. Similar results\footnote{again for data small in $L^1$ sense and so that it belongs to some smooth modulation spaces}  (in $\rn, n\geq 1$),  were obtained for the more general Ginzburg-Landau equation in the work of Han-Wang-Guo, \cite{HWG}. Finally, we mention some recent local well-posedness results, which were obtained for (not necessarily small) data in 
 weighted Sobolev spaces by Bejenaru, \cite{Bej1, Bej2} 
 and Bejenaru-Tataru, \cite{BTat}. 
 
 While some of the   positive results mentioned above surely will extend to the case $\be\in (0,1)$, it seems that this model has not 
 been considered in the literature. One of the purposes of the current paper is to address the question for local well-posedness of this problem. An alternative goal is to develop an alternative proof of the known results in the case $\be=0$, which is within the framework of the standard $X^{s,b}$ spaces. We achieve that by the technique of normal forms. This method has been used extensively in the last twenty years, to treat global small solutions of  models with low-power nonlinearities -  see the pioneering work of Shatah, \cite{shatah} and more recent developments in  Germain-Masmoudi-Shatah \cite{GMS}, \cite{GMS1} and  Shatah, \cite{shatah1}.

 Informally, the method starts with a preconditioning of the equation first (via a  change of variables - normal form). The particular type of the normal form is a bilinear pseudo-differential operator, which solves explicitly the Schr\"odinger equation with right-hand side, which consists of 
 the most troublesome terms in the nonlinearity, see \eqref{30} below. Then, one needs to argue that the remaining terms of the solution (which are obtained through a fixed point argument, involving themselves, the normal form and various interactive terms) are better behaved - in our case, the are (much) smoother than the free solution. As a byproduct of this approach, we obtain more precise information on these  correction terms (see \eqref{s:5}). {\it To the best of our knowledge, this paper is the first one that treats low regularity solutions, with the method of normal forms.}
 
 Our main results recover (up to an endpoint) the sharp results of \cite{BT}, \cite{Nishimoto2} in the case $\be=0$, but of course covers also  the new 
 cases $\be \in (0,1/2)$, where the results are also arguably sharp.  We also obtain the Lipschitz property \eqref{a:200} of the solution map, which is a new feature, even in the case $\be=0$. \\
More specifically, 
\begin{theorem}
\label{theo:1}
Let $\be\in [0,\f{1}{2})$ and $\al: \f{1}{2}< \al <1-\be$. Then the equation \eqref{1} is locally well-posed in 
$H^{-\al}(\rone)$.  More specifically, for every $u_0\in H^{-\al}(\rone)$, there exists a non-trivial $T>0$, so that the equation \eqref{1} has an unique solution 
$u\in C([0,T), H^{-\al})$. 

Moreover, for fixed $\de: 0<\de< \f{1-\al-\be}{10}$, there is the following decomposition 
\begin{equation}
\label{s:5}
u=e^{-i t \p_x^2} u_0+ h+ w,
\end{equation} 
where $h\in L^\infty_t H^{\f{1}{2}-\al}_x\cap X^{1-\al-\de, \de}$, $w\in 
X^{-\f{1}{2} , \f{1}{2}+\de}$. In particular $u-e^{-i t \p_x^2} u_0 
\in L^\infty_t H^{-\f{1}{2}}_x$. \\ \\ 
\underline{We also have the following Lipschitz property of the solution map of \eqref{1}}.  
Let $N>0$ and $u_0, v_0 \in H^{-\al}(\rone): \|u_0\|_{H^{-\al}}<N, 
\|v_0\|_{H^{-\al}}<N $, so that $u_0-v_0 \in H^{-\f{1}{2}}$. Then the corresponding solutions (defined on a common non-trivial time interval $(0,T_N)$) satisfy 
\begin{equation}
\label{a:200}
\|u-v\|_{L^\infty_T H^{-\f{1}{2}}_x}\leq C_N \|u_0-v_0\|_{H^{-\f{1}{2}}_x}.
\end{equation}
where $C_N$ depends only on $N$. 
\end{theorem}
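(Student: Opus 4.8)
The plan is to recast \eqref{1} in Duhamel form in the interaction picture and to peel off, by a normal form transformation, the quadratic interactions that obstruct a direct contraction at regularity $-\al$. Set $v(t)=e^{-it\p_x^2}u_0$ and pass to spatial Fourier variables; with $\xi=\xi_1+\xi_2$, the bilinear Duhamel term attached to $\nab^\be[u^2]$ carries the resonance phase $e^{is\Om}$ with $\Om=\Om(\xi_1,\xi_2)=2\xi_1\xi_2$ (up to sign), and the interaction-picture profile of $v$ is constant in time. I split the $(\xi_1,\xi_2)$-region into a \emph{resonant} part $\{|\Om|\lesssim 1\}$ and a \emph{non-resonant} part $\{|\Om|\gtrsim 1\}$, and on the latter integrate by parts in $s$,
\[
\int_0^t e^{is\Om}F(s)\,ds=\frac{e^{it\Om}}{i\Om}F(t)-\frac{1}{i\Om}F(0)-\frac{1}{i\Om}\int_0^t e^{is\Om}F'(s)\,ds .
\]
The two boundary terms assemble into a bilinear pseudodifferential operator $h=B(u,u)$ (evaluated at times $t$ and $0$) carrying the gain $|\Om|^{-1}=(2|\xi_1\xi_2|)^{-1}$; this is the normal form that yields the decomposition \eqref{s:5}. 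In the remaining integral $\p_s$ falls only on the $h$- and $w$-factors (the $v$-profile being constant), and each of those is again quadratic, so this term is a trilinear expression $\mathcal T[u,u,u]$ which also inherits the factor $|\Om|^{-1}$. Thus $h=B(u,u)$ is slaved to $u$ and $w:=u-v-h$ solves a fixed point equation $w=\mathcal D_{res}[u,u]+\mathcal T[u,u,u]$, and the whole problem reduces to multilinear estimates for $B$, $\mathcal D_{res}$ and $\mathcal T$.

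I would run the iteration for $u\in v+X^{1-\al-\de,\de}+X^{-\f12,\f12+\de}$ over a short interval $[0,T]$, gaining a factor $T^\theta$, $\theta>0$, at each step. Three estimates are needed. First, the \emph{normal form estimate} $\|B(f,g)\|_{L^\infty_t H^{\f12-\al}_x\cap X^{1-\al-\de,\de}}\lesssim\|f\|_{X^{-\al,\f12+\de}}\|g\|_{X^{-\al,\f12+\de}}$, which gains half a derivative over the inputs: in the worst output regime the high--high frequencies force $|\xi_1|\sim|\xi_2|\gtrsim\langle\xi\rangle$, and there $\langle\xi\rangle^{\f12-\al+\be}|\xi_1\xi_2|^{-1}$ (resp.\ $\langle\xi\rangle^{1-\al-\de+\be}|\xi_1\xi_2|^{-1}$) is dominated by $\langle\xi_1\rangle^{-\al}\langle\xi_2\rangle^{-\al}$ with slack once $\al<1-\be$; the separate $L^\infty_t H^{\f12-\al}_x$ bound is needed because $h$ lives at modulation exponent $\de<\f12$, so its time continuity is not automatic. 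Second, the \emph{resonant bilinear estimate} $\|\mathcal D_{res}[f,g]\|_{X^{-\f12,\f12+\de}}\lesssim T^\theta\|f\|\,\|g\|$ in the ambient norms: on $\{|\xi_1\xi_2|\lesssim1\}$ one of the frequencies is essentially $O(1)$, the region is thin, and the resulting gain suffices to land exactly at regularity $-\f12$. Third, the \emph{trilinear estimate} $\|\mathcal T[f,g,k]\|_{X^{-\f12,\f12+\de}}\lesssim T^\theta\|f\|\,\|g\|\,\|k\|$, using the $|\Om|^{-1}$ gain carried over from the normal form. In all three cases the dyadic sums converge precisely because $\al<1-\be$ leaves room. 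A standard contraction then gives a unique solution in the ball, $u\in C([0,T),H^{-\al})$ from the $L^\infty_t H_x$ components, and the decomposition \eqref{s:5}; since $\f12-\al>-\f12$ one gets $u-v=h+w\in L^\infty_t H^{-\f12}_x$.

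For the Lipschitz bound \eqref{a:200} I would rerun these estimates on differences. Writing $u^{(j)}=v^{(j)}+h^{(j)}+w^{(j)}$ for the two solutions, $j=1,2$, with $\|u_0^{(j)}\|_{H^{-\al}}<N$, one has $u^{(1)}-u^{(2)}=e^{-it\p_x^2}(u_0^{(1)}-u_0^{(2)})+(h^{(1)}-h^{(2)})+(w^{(1)}-w^{(2)})$, the first term having $L^\infty_t H^{-\f12}_x$ norm exactly $\|u_0^{(1)}-u_0^{(2)}\|_{H^{-\f12}}$. Since $B$, $\mathcal D_{res}$, $\mathcal T$ are multilinear, $h^{(1)}-h^{(2)}$ and $w^{(1)}-w^{(2)}$ are sums of terms with one ``difference'' factor and the rest equal to some $u^{(j)}$; the point is that each operator also satisfies the \emph{mixed} bound obtained by measuring the difference factor in the $-\f12$-scale and the others in the $-\al$-scale, e.g.\ $\|B(f,g)\|_{X^{-\f12,\f12+\de}}\lesssim\|f\|_{X^{-\f12,\f12+\de}}\|g\|_{X^{-\al,\f12+\de}}$ and symmetrically, which again holds when $\al<1-\be$ (in the worst regime $\langle\xi\rangle^{-\f12+\be}|\xi_1\xi_2|^{-1}\lesssim\langle\xi_1\rangle^{-\f12}\langle\xi_2\rangle^{-\al}$), and likewise for $\mathcal D_{res}$ and $\mathcal T$. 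Measuring $h^{(1)}-h^{(2)}$ in $X^{1-\al-\de,\de}$ and $w^{(1)}-w^{(2)}$ in $X^{-\f12,\f12+\de}$, the a priori bounds $\le N$ and the factor $T^\theta$ yield $\|u^{(1)}-u^{(2)}\|_{L^\infty_T H^{-\f12}_x}\lesssim\|u_0^{(1)}-u_0^{(2)}\|_{H^{-\f12}}+C_N T^\theta(\|h^{(1)}-h^{(2)}\|_{X^{1-\al-\de,\de}}+\|w^{(1)}-w^{(2)}\|_{X^{-\f12,\f12+\de}})$, so shrinking $T_N$ absorbs the last terms and gives \eqref{a:200}.

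The main obstacle is exactly the feature the theorem advertises: showing that, once the normal form $h$ has been removed, the genuine remainder $w$ lies in the \emph{fixed} space $X^{-\f12,\f12+\de}$ — strictly better than the data space $H^{-\al}$ — rather than in some $\al$-dependent space. Establishing the trilinear estimate for $\mathcal T$ into $X^{-\f12,\f12+\de}$ with only the single gain $|\Om|^{-1}$, together with the resonant bilinear estimate at the same endpoint, requires a careful case analysis of the frequency interactions (high--high--to--low, low--high, and comparable--modulation regimes) in which the one-dimensional dispersion and the gain $|\Om|^{-1}$ must be used sharply; arranging the normal form term $h$ to satisfy both claimed bounds simultaneously, and checking uniqueness in $C([0,T),H^{-\al})$ and not merely in the iteration ball, are the remaining technical points.
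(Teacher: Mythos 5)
Your proposal is correct in outline and is essentially the paper's argument: the time integration by parts against the phase $\Om=2\xi_1\xi_2$ is the interaction‑picture formulation of the paper's identity \eqref{8} for the bilinear operator $T$ (whose symbol carries exactly the $(\xi\eta)^{-1}$ gain you describe), the decomposition $u=e^{-it\p_x^2}u_0+h+w$ with $h\in L^\infty_tH^{\f12-\al}\cap X^{1-\al-\de,\de}$ and $w\in X^{-\f12,\f12+\de}$ is \eqref{s:5} verbatim, and your normal‑form, resonant bilinear, and trilinear estimates correspond to Lemmas \ref{tmap1}/\ref{tmap3}, the low‑frequency bounds via Lemma \ref{estimates}, and Lemma \ref{le:7} respectively. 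The only organizational difference is that the paper applies the normal form solely to the free‑evolution square (so $h=T(e^{-it\p_x^2}f,e^{-it\p_x^2}f)$ is explicit in the data and the cross terms in $h,w$ are handled as bilinear sources in the $w$‑equation), whereas you take $h=B(u,u)$ of the full solution and generate the cubic term by differentiating the Duhamel part — a cosmetic rearrangement that leads to the same multilinear estimates.
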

{\bf Remarks:} 
\begin{itemize}
\item There are of course well-posedness results  in the 
cases $\al\in (0,1/2)$ and they are easier to obtain. We chose to include only those with $\al>1/2$ in order to simplify our exposition.  
\item The Lipschitz property \eqref{a:200} is new even in the case $\be=0$.

\item We do not obtain l.w.p. for the case $\al=1-\be$, which in the case $\be=0$, will correspond to the endpoint case of $s=1$, considered  in \cite{BT}. 
Our arguments would imply such a statement, at least in the case of a  Besov-1 space version 
of the main result.

\item While our arguments fail at $\be\geq 1/2$, we cannot claim  sharpness in this regard. However, we very strongly suspect that this is the case. That is, we conjecture that some form of ill-posedness must occur, when one considers solutions to \eqref{1} with  $\be=1/2$. 

\end{itemize}
We also have the following corollary. Consider 
\begin{equation}
\label{s:10}
\left| 
\begin{array}{l}
z_t+ i z_{xx} =   \nab^{\beta} z  \nab^{\beta} z \qq (t,x)\in \rone_+\times \rone\\
z(0,x)=z_0  
\end{array}\right.
\end{equation}
Setting $u=\nab^{\beta} z$ yields the equation 
\begin{equation}
\label{s:20}
u_t+ i u_{xx} =   \nab^{\beta} [u^2],
\end{equation}
for $u$. By Theorem~\ref{theo:1}, we conclude that \eqref{s:20} is well-posed in $H^{-\al}$, for all $\f{1}{2}<\al<1-\be$. Therefore, in terms of $z$, we have well-posedness in $H^{-\al+\be}$ 
\begin{corollary}
\label{corol:1}
Let $\be\in [0,1/2)$ and $0<\ga<1-2\be$. The equation \eqref{s:10} is well posed in $H^{-\ga}$. 
\end{corollary}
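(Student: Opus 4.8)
The plan is to derive Corollary~\ref{corol:1} directly from Theorem~\ref{theo:1} by the substitution already indicated in the excerpt, namely $u = \nab^\be z$. First I would fix $\be \in [0,1/2)$ and $\ga \in (0, 1-2\be)$ and set $\al := \ga + \be$; then $\al > \be \ge 0$, and since $\ga < 1 - 2\be$ we get $\al = \ga+\be < 1-\be$. The only delicate point at the level of parameters is the requirement $\al > 1/2$ in Theorem~\ref{theo:1}: if $\al = \ga+\be \le 1/2$ we cannot invoke the theorem as stated, but (as the first remark after Theorem~\ref{theo:1} notes) the well-posedness in the range $\al \in (0,1/2)$ is easier and holds as well, so one either cites that remark or, if one wants to stay strictly within the stated theorem, restricts attention to $\ga$ with $\ga + \be > 1/2$ and treats the complementary low-regularity range separately. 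I would phrase the corollary's proof so that it covers the whole interval $0 < \ga < 1-2\be$ by appealing to well-posedness for \eqref{s:20} in $H^{-\al}$ for every $\al \in (0, 1-\be)$.

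Next I would make the change of variables precise. The map $z \mapsto \nab^\be z$ is an isometric isomorphism from $H^{-\ga}(\rone)$ onto $H^{-\ga-\be}(\rone) = H^{-\al}(\rone)$, with inverse $\nabam[\be]$, and it commutes with $\p_t + i \p_x^2$ since $\nab^\be$ is a Fourier multiplier. Hence if $z$ solves \eqref{s:10} with $z(0) = z_0 \in H^{-\ga}$, then $u := \nab^\be z$ solves \eqref{s:20} with $u(0) = \nab^\be z_0 \in H^{-\al}$, and conversely any solution $u \in C([0,T), H^{-\al})$ of \eqref{s:20} yields $z := \nabam[\be] u \in C([0,T), H^{-\ga})$ solving \eqref{s:10}. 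So existence, uniqueness, and continuous-in-time persistence of the regularity for \eqref{s:10} in $H^{-\ga}$ transfer verbatim from the corresponding statements for \eqref{s:20} in $H^{-\al}$, which Theorem~\ref{theo:1} provides. Continuous dependence likewise transfers, since $\nab^\be$ and $\nabam[\be]$ are bounded between the relevant spaces; in fact the Lipschitz bound \eqref{a:200} pushes forward to a Lipschitz estimate for \eqref{s:10} in $H^{-\f12-\be}$ for data in $H^{-\ga}$ bounded balls, though stating this is optional.

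The one genuinely substantive issue — and the part I expect to require the most care — is the meaning of ``solution'' under the substitution, i.e. making sure the normal-form decomposition \eqref{s:5} for $u$ is compatible with the notion of solution one wants for $z$. Since the theorem asserts uniqueness of $u$ in $C([0,T), H^{-\al})$ in whatever solution class the proof of Theorem~\ref{theo:1} establishes, and since $z \leftrightarrow u$ is a bijection of the respective function spaces via a Fourier multiplier, uniqueness for $z$ holds in the pulled-back class; one just has to state the corollary's well-posedness with respect to that class. I would spell this out in one or two sentences and then close the argument. No hard estimates are needed: the whole corollary is a transfer-of-structure observation, and the proof is accordingly short.
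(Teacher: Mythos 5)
Your proposal is correct and follows essentially the same route as the paper: the substitution $u=\nab^{\be}z$ converts \eqref{s:10} into \eqref{s:20}, and well-posedness in $H^{-\al}$ for $\al=\ga+\be<1-\be$ transfers to well-posedness in $H^{-\ga}$. You are in fact slightly more careful than the paper, which silently relies on the remark about the easier range $\al\le 1/2$ to cover $0<\ga\le \f{1}{2}-\be$; flagging that gap explicitly, as you do, is the right call.
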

The paper is organized as follows. In Section~\ref{sec:s2}, we introduce the $X^{s,b}$ spaces, as well as Tao's theory of bilinear estimates with $X^{s,b}$ entries, \cite{Tao1}. In Section~\ref{sec:s3}, we construct the normal forms and set the function spaces that will be used. In Section~\ref{sec:s32}, we provide the basic estimates for the normal form transformation. In Section~\ref{sec:s33}, we state and prove the bilinear and trilinear estimates, needed for the fixed point argument. This is where the main technical difficulties are present. In Section~\ref{sec:s34}, we conclude the proof, by reducing it to the bilinear and trilinear estimates proved in Section~\ref{sec:s2} and Section \ref{sec:s33}. In Section~\ref{last}, we give some ideas on how to approach the question for local well-posedness for the problem with nonlinearities of the form $\nab^{\beta}[u \bar{u}]$ and $\nab^{\beta}[\bar{u}^2]$. 
\section{Some notations and preliminaries}
\label{sec:s2}
\subsection{Littlewood-Paley projections and Paraproducts} \label{lp}
 
Introduce the Fourier transform and its inverse via 
\begin{eqnarray*}
& & \hat{h}(\xi)= \int_{\rone} h(x) e^{-i x\xi} dx, \\
& & h(x)=\f{1}{2\pi} \int_{\rone} \hat{h}(\xi) e^{i x\xi} dx.
\end{eqnarray*}

 Let  $\Phi: \rone \to \rone$ be a positive, smooth even function  supported in $\{ \xi: |\xi| \leq 2\}$, and $\Phi(\xi) =1$ for all $|\xi|\leq 1$.  Define $\varphi (\xi) =\Phi(\xi) - \Phi(2\xi)$, which is supported in the annulus $1/2 \leq |\xi| \leq 2$.  Clearly $\Phi(\xi) +\sum_{k\in \zz^+} \varphi(2^{-k} \xi) =1$ for all $\xi$.\\

The $k^{th}$ Littlewood-Paley projection is $\widehat{P_k f}(\xi) = \varphi (2^{-k} \xi) \widehat{f}(\xi)$ for $k\in \zz^+$.  Similarly, $\widehat{P_{\leq 0} f} (\xi) = \Phi(\xi) \widehat{f}(\xi)$, and for any subset $A \subseteq Z$, we denote $\widehat{P_{A} f}(\xi) = \sum_{k\in A} \varphi(2^{-k} \xi) \widehat{f}(\xi)$ with the summation replaced by $\Phi(\xi)$ when necessary.  The kernels of $P_k$, $P_{\leq 0}$ are uniformly integrable and thus $P_k, P_{\leq 0}: L^p \to L^p$ for $1\leq p\leq \infty$ with the bound equivalent to $||\widehat{\Phi}(\xi)||_{L^1}$ (independent of $k$).  We will often use the notation $g_k$ in place of $P_k g$.\\

Define the operator $\langle \nabla \rangle^{\alpha}: H^{s} \to H^{s-\alpha}$ by $\widehat{\langle \nabla \rangle^{\alpha} g} (\xi) = (1+\xi^2)^{1/2} \widehat{g}(\xi)$.  Then $\|g\|_{H^s} = \|\langle \nabla \rangle^s g\|_{L^2}$, so $\|g_k\|_{H^s} \sim 2^{ks} \|g\|_{L^2}$ and 
$\|g_{\leq 0}\|_{H^s}\sim \|g\|_{L^2}$. 

Note that in order to simplify the exposition,  we will use the notation somewhat loosely in the sense that $g_k$ will  always denote a function so that $\widehat{g_k}(\xi)=\psi(2^{-k}\xi) \hat{g}(\xi)$,  where $\psi\in C_0^\infty([1/2,2])$ (but  the function $\psi$ may change from line to line). 

Next, we introduce the following basic decomposition from the theory of the paraproducts.  For any two Schwartz functions $f,g$ and $k\in \zz$, 
\[
P_k (fg) = P_k\left(\sum_{l,m} f_l g_m \right)= P_k \left( \sum_{|l-m|\leq 3} f_l g_m\right) + 
P_k \left( \sum_{|l-m|>3} f_l g_m\right).
\]

Furthermore, in the first sum, we have the restriction $\min (l,m) > k-5$; and in the second sum, we have $|\max (l,m)-k| \leq 3$.  Otherwise the $\textnormal{supp } \widehat{f_l g_m}$ will be away from $\{ \xi: |\xi|\sim 2^k\}$, and thus $P_k (f_l g_m) =0$.  We refer to the first summand as "high-high interaction" terms, and the second summand as "high-low interaction" terms.

\subsection{$X^{s,b}$ spaces and bilinear $L^2$ multipliers.}

For $s,b \in \rone$ and a function~$h$ of one variable, we define the (inhomogeneous) $X^{s,b}_{\tau= h(\xi)}$ spaces  to be the completion of $\mathcal{S}(\mathbf{R \times R})$ with respect to the norm\footnote{All of this makes perfect sense in higher spatial dimensions, but in this paper, we will confine our attention to the case $x\in\rone$}
\begin{equation}
\label{xsb1}
\|u(t,x)\|_{X^{s,b}_{\tau=h(\xi)}} = \left(\int_{\rone\times\rone} (1+\xi^2)^{s} (1+ |\tau- h(\xi)|)^{2b} |\widetilde{u}(\tau,\xi)|^2 \, d\tau \, d\xi\right)^{\frac{1}{2}}
\end{equation}
where $\displaystyle \widetilde{u} (\tau, \xi) = \int_{\rone \times \rone} u(t,x) e^{-i (t\tau + x\xi)}\, dt\, dx$.  Note that $X^{s,0}_{\tau=h(\xi)} = L^2_t H^s_x$.  \\

For Shr\"{o}dinger's equations, we use $h(\xi) = \pm \xi^2$.  For convenience, we will shorten the notation $X^{s,b} := X^{s,b}_{\tau=\xi^2}$.  Sometimes it will be necessary to use the space~$X^{s,b}_{\tau=-\xi^2}$, and it will be distinguished properly.\\

The usefulness of $X^{s,b}$ spaces come from the fact that it measures smoothness of a function not only in the classical Sobolev sense, but also in terms of the interaction between space and time frequencies.  More specifically, for $f\in L^2$, the free solution~$e^{-it\p_x^2} f \in L^2$ has no additional smoothness in the Sobolev scale than the initial data, but it is indeed very smooth in the weight~$\tau-\xi^2$ (and in fact lives, on the Fourier side, exactly on the parabola $\tau=\xi^2$).  \\

$X^{s,b}$ spaces can be used to study local-in-time solutions.  This is usually done by multiplying the solution by a smooth cutoff function.  
There is  the following classical estimate
\begin{proposition}\label{xsb}
Let $\Phi$ be a smooth cutoff adapted to $(-2,2)$ as defined as in Section~\ref{lp}.  
If $u(t,x)$ solves $(\p_t + i\p_x^2) u = F$ and 
$u(0,x) = u_0(x)$, then for $T>0$, $s\in \rone$ and $\delta>0$, there exists $C_{T, \de}$, so that 
\[
\|\Phi (t/T) u\|_{X^{s,\frac{1}{2}+\delta}} \leq C_{T,\delta} (||u_0||_{H^s} + ||F||_{X^{s,-\frac{1}{2}+\delta}}).
\]
\end{proposition}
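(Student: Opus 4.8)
The plan is the classical energy estimate for $X^{s,b}$ spaces run through Duhamel's formula. First I would reduce to $s=0$: the operator $\nab^s$ commutes with $\p_t$, with $\p_x^2$ and with multiplication by $\Phi(t/T)$, and it is an isometric isomorphism $H^s\to L^2$ and $X^{s,b}\to X^{0,b}$, so replacing $(u,u_0,F)$ by $(\nab^s u,\nab^s u_0,\nab^s F)$ we may assume $s=0$. Duhamel's formula gives
\[
u(t)=e^{-it\p_x^2}u_0+\int_0^t e^{-i(t-r)\p_x^2}F(r)\,dr=:v(t)+w(t),
\]
so it suffices to bound $\Phi(\cdot/T)v$ and $\Phi(\cdot/T)w$ in $X^{0,\f{1}{2}+\de}$ separately.

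The free part is immediate on the Fourier side. Since $\wt v(\tau,\xi)=2\pi\,\widehat{u_0}(\xi)\,\de(\tau-\xi^2)$, multiplication in $t$ by $\Phi(t/T)$ convolves in $\tau$ with a rescaling of $\widehat\Phi$, so $\wt{\Phi(\cdot/T)v}(\tau,\xi)=c\,T\,\widehat\Phi(T(\tau-\xi^2))\,\widehat{u_0}(\xi)$. Substituting into \eqref{xsb1} and putting $\sigma=\tau-\xi^2$ factors the integral as $\|u_0\|_{L^2}^2\cdot\int_\rone(1+|\sigma|)^{1+2\de}T^2|\widehat\Phi(T\sigma)|^2\,d\sigma$, and the last integral is finite (and a function of $T,\de$ only) because $\widehat\Phi\in\mathcal S$. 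This gives the desired bound, in fact with any exponent in place of $\f{1}{2}+\de$.

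The Duhamel part carries the actual content, and it is the step I expect to be the main obstacle. Computing the $r$-integral explicitly and writing $\eta(t):=\Phi(t/T)$, one gets
\[
\wt{\eta w}(\tau,\xi)=\f{1}{2\pi}\int_\rone\wt F(\tau',\xi)\,\f{\widehat\eta(\tau-\tau')-\widehat\eta(\tau-\xi^2)}{i(\tau'-\xi^2)}\,d\tau'.
\]
I would split the $\tau'$-integral into $|\tau'-\xi^2|>1$ and $|\tau'-\xi^2|\le 1$. On the first region, split the kernel into $\widehat\eta(\tau-\tau')/(i(\tau'-\xi^2))$ and $-\widehat\eta(\tau-\xi^2)/(i(\tau'-\xi^2))$: the first contribution is, in $\tau$, the convolution of $\mathbf 1_{|\tau'-\xi^2|>1}\wt F(\tau',\xi)/(i(\tau'-\xi^2))$ against the Schwartz function $\widehat\eta$, controlled by Peetre's inequality and Young's inequality together with $(1+|\tau'-\xi^2|)^{\f{1}{2}+\de}/|\tau'-\xi^2|\lesssim(1+|\tau'-\xi^2|)^{-\f{1}{2}+\de}$ there; the second contribution is the fixed bump $\widehat\eta(\tau-\xi^2)$ times $\int_{|\tau'-\xi^2|>1}\wt F(\tau',\xi)/(i(\tau'-\xi^2))\,d\tau'$, which by Cauchy--Schwarz is $\lesssim\|(1+|\tau'-\xi^2|)^{-\f{1}{2}+\de}\wt F(\cdot,\xi)\|_{L^2}$ (this is exactly where $\de>0$ is used, to make $\int_{|\rho|>1}\rho^{-2}(1+|\rho|)^{1-2\de}\,d\rho$ converge). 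On the near-resonant region $|\tau'-\xi^2|\le1$ I would remove the apparent singularity by the mean value theorem,
\[
\f{\widehat\eta(\tau-\tau')-\widehat\eta(\tau-\xi^2)}{i(\tau'-\xi^2)}=i\int_0^1\widehat\eta'\bigl((\tau-\xi^2)-\theta(\tau'-\xi^2)\bigr)\,d\theta,
\]
which, since $\widehat\eta'\in\mathcal S$, is $O_N((1+|\tau-\xi^2|)^{-N})$ uniformly in $\theta\in[0,1]$ and $|\tau'-\xi^2|\le1$; hence this contribution is $\lesssim_N(1+|\tau-\xi^2|)^{-N}\int_{|\tau'-\xi^2|\le1}|\wt F(\tau',\xi)|\,d\tau'$, and Cauchy--Schwarz on an interval of length $2$ plus integrability of $(1+|\tau-\xi^2|)^{1+2\de-2N}$ closes it. Summing the regions and integrating in $\xi$ gives $\|\eta w\|_{X^{0,\f{1}{2}+\de}}\lesssim_{T,\de}\|F\|_{X^{0,-\f{1}{2}+\de}}$.

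In short, the only point requiring genuine care is the inhomogeneous (Duhamel) estimate: the near-resonant frequencies $|\tau'-\xi^2|\lesssim1$, where the naive kernel is singular, must be handled by trading the singularity for smoothness/decay of the cutoff's Fourier transform, and the threshold $b=\f{1}{2}+\de$ with $\de>0$ is precisely what makes the leftover $\tau'$-integrals converge; the $T$-dependence of $C_{T,\de}$ is then carried along through the finite weighted $L^1$ and $L^2$ norms of $\widehat\eta$ and $\widehat\eta'$. I would also note that this proposition is completely standard and could instead be cited, e.g.\ from Ginibre--Tsutsumi--Velo or Tao's monograph.
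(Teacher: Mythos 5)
The paper does not prove this proposition at all: it is stated as ``the following classical estimate'' and used as a black box, so there is no in-paper argument to compare against. Your proof is the standard one and is correct as written: the reduction to $s=0$, the exact formula $\wt{\eta w}(\tau,\xi)=\f{1}{2\pi}\int\wt F(\tau',\xi)\,\bigl(\widehat\eta(\tau-\tau')-\widehat\eta(\tau-\xi^2)\bigr)/(i(\tau'-\xi^2))\,d\tau'$, the far/near split in $|\tau'-\xi^2|$, and the use of $\de>0$ to make $\int_{|\rho|>1}\rho^{-2}(1+|\rho|)^{1-2\de}\,d\rho$ converge are all exactly where the content lies, and your handling of the near-resonant region via the difference quotient $i\int_0^1\widehat\eta'((\tau-\xi^2)-\theta(\tau'-\xi^2))\,d\theta$ is the right way to kill the apparent singularity. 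As you note, this is the classical $X^{s,b}$ energy/Duhamel estimate (Ginibre--Tsutsumi--Velo, or Tao's book), which is evidently why the authors cite rather than prove it.
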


We give some additional properties of $X^{s,b}$ norms.\\

Using $\widetilde{\overline{u}}(\tau,\xi) = \overline{\widetilde{u} (-\tau,-\xi)}$, we obtain
\begin{equation}\label{bar}
\|\overline{u}(t,x)\|_{X_{\tau=\xi^2}^{s,b}} = \int_{\rone\times \rone} (1+\xi^2)^{s} (1+ |\tau + \xi^2|)^{2b} |\widetilde{u}(\tau,\xi)|^2 \, d\tau \, d\xi =\|u\|_{X_{\tau=-\xi^2}^{s,b}}.
\end{equation}

Also we have the duality relation: $(X_{\tau=\xi^2}^{s,b})^* = X^{-s,-b}_{\tau=-\xi^2}$, which also allows us to define the equivalent norm
\begin{equation}\label{dual1}
\|u(t,x)\|_{X^{s,b}_{\tau=\xi^2}} = \sup_{\|v\|_{X_{\tau=-\xi^2}^{-s,-b}}=1} \left| \int_{\rone \times\rone} u(t,x) v(t,x)\, dt\,dx \right|.
\end{equation}

Note that using \eqref{bar}, we can also write
\begin{equation} \label{dual2}
\|u(t,x)\|_{X^{s,b}_{\tau=\xi^2}} = \sup_{\|v\|_{X_{\tau=\xi^2}^{-s,-b}}=1} \left| \int_{\rone \times\rone} u(t,x) \overline{v}(t,x)\, dt\,dx \right|.
\end{equation}

Observe that for $u,v\in X^{s,b}_{\tau=\xi^2}$, 
\begin{align}
\|u v\|_{X^{s,b}_{\tau=\xi^2}} &= \sup_{\|w\|_{X_{\tau=-\xi^2}^{-s,-b}}=1} \left| \int_{\rone \times \rone} u(t,x) v(t,x) w(t,x)\, dt\,dx \right| \notag\\
	&= \sup_{\|v\|_{X_{\tau=-\xi^2}^{-s,-b}}=1} \left| \int_{\rone \times \rone} \widehat{u v}(\tau,\xi) \widehat{w}(-\tau,-\xi)\, dt\,dx \right| \notag\\
&= \sup_{\|w\|_{X_{\tau=-\xi^2}^{-s,-b}}=1} \left| \int_{\begin{array}{l}\tau_1+\tau_2+\tau_3=0;\\ \xi_1 + \xi_2+\xi_3=0\end{array}} \widehat{u} (\tau_1,\xi_1) \widehat{v}(\tau_2,\xi_2) \widehat{w}(\tau_3,\xi_3)\, d\sigma \right|. \label{uv}
\end{align}
where $d\sigma$ is the measure on the given hyperplane inherited from $\rthree \times \rthree$.

Next, we point out an useful relationship between the mixed Lebesgue spaces and the $X^{s,b}$ spaces, which is based on the Strichartz estimates for the free Schr\"odinger equation. Namely, since we have $\|e^{-i t \p_x^2} f\|_{L^q_t L^r_x}\leq C_{Str.} \|f\|_{L^2}$, for all pairs $(q,r): 2\leq q,r\leq \infty$, $\f{2}{q}+\f{1}{r}=\f{1}{2}$, we may conclude the following estimate 
\begin{equation}
\label{a:1}
\|u\|_{L^q_t L^r_x}\leq C_{Str., \de} \|u\|_{X^{0,\f{1}{2}+\de}}.
\end{equation}
for all Strichartz pairs $(q,r) :\f{2}{q}+\f{1}{r}=\f{1}{2}$ and for all $\de>0$. 

Motivated from the expression \eqref{uv}, we introduce the bilinear $L^2$ multiplier norm.  For the rest of the section, we generally follow Tao's setup, \cite{Tao1}.\\

Let $\tau = (\tau_1, \tau_2,\tau_3) \in \rthree$, $\xi = (\xi_1, \xi_2, \xi_3) \in \rthree$.  Let $\Gamma$ be a hyperplane in $\rthree\times \rthree$ such that $\tau_1+\tau_2+\tau_3=0$ and $\xi_1+ \xi_2+\xi_3=0$, and let $d\sigma$ be the measure on $\Gamma$ inherited from $\rthree \times \rthree$.\\

Given a function~$m(\tau,\xi)$ defined on $\Gamma$, we define $||m||_{\mathcal{M}}$ to be the smallest constant~$C$ such that the following inequality holds:
\begin{equation}
\label{mnorm}
\left| \int_{\Gamma} m(\tau,\xi) u(\tau_1,\xi_1) v(\tau_2,\xi_2) w(\tau_3,\xi_3) \, d\sigma \right| 
\leq C \|u\|_{L^2_{\tau_1, \xi_1}} \|v\|_{L^2_{\tau_2,\xi_2}} \|w\|_{L^2_{\tau_3,\xi_3}}.
\end{equation}

From the definition \eqref{mnorm}, we claim the Comparison Principle.  That is, if $|m(\tau,\xi)| \leq M(\tau,\xi)$ for all $(\tau,\xi)\in \Gamma$, then $\|m\|_{\mathcal{M}} \leq \|M\|_{\mathcal{M}}$.\\

\subsection{Estimates on bilinear $L^2$ multiplier norms from \cite{Tao1}}

We introduce new notations which will be useful for working with these multipliers.  Let  
$$
\widetilde{u_{N,L}} (\tau,\xi) = \chi_{[N, 2N]} (|\xi|) \chi_{[L,2L]} ( |\tau-\xi^2|) \widetilde{u}(\tau,\xi),
$$   
By using capitalized letters~$N,L$, we will always assume that $N,L$ are dyadic numbers, i.e. 
$N=2^j$, $L=2^l$ for some $j,l\in \zz$. \\

We will often encounter the following expression (for $\ve_j = \pm 1$ for $j=1,2,3$):
\begin{equation}\label{mult}
\|\chi_{[H,2H]} (|\ve_1 \xi_1^2+\ve_2 \xi_2^2 +\ve_3 \xi_3^2|) \prod_{j=1}^3 \chi_{[N_j, 2N_j]}(|\xi_j|) \chi_{[L_j,2L_j]} (|\tau -\ve_j \xi^2|) \|_{\mathcal{M}}.
\end{equation}
We will denote the above $L^2$ bilinear multiplier by $\chi^{(\pm,\pm,\pm)}$ (i.e. $\chi^{(+,+,-)}$ refers to the above expression with $(\ve_1,\ve_2,\ve_3) = (+1,+1,-1)$).\\

The following statement is Proposition 11.1 in \cite{Tao1}.
\begin{proposition}[$(+,+,+)$ case]
\label{prop:5} 
Let $H,N_1,N_2,N_3,L_1,L_2,L_3>0$ and $\ve_1=\ve_2=\ve_3 =+1$ in \eqref{mult}.  
If $N_{\max} \sim N_{med}$ and $H\sim N_{\max}^2$ and $L_{\max} \sim \max(H,L_{\med})$, then we have following cases 
\begin{enumerate}
\item In the exceptional case where $N_{\max} \sim N_{\min}$ and $L_{\max}\sim H$,
\begin{equation}
\label{ppp2}
\eqref{mult} \leq C L_{\min}^{\frac{1}{2}} L_{\med}^{\frac{1}{4}}
\end{equation}
\item In all the remaining cases (i.e. $N_{\min}<<N_{\max}$ or $L_{\max}\nsim H$), there is an absolute constant $C$, so that  
\begin{equation}\label{ppp1}
\eqref{mult} \leq C \frac{L_{\min}^{\frac{1}{2}} L_{\med}^{\frac{1}{2}}}{N_{\max}^{\frac{1}{2}}}
\end{equation}
\end{enumerate}
\end{proposition}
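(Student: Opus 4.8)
\emph{Proof strategy.} The multiplier in \eqref{mult} is the characteristic function $\chi_R$ of the region
\[
R=\Big\{(\tau,\xi)\in\Gamma:\ |\xi_j|\sim N_j,\ |\tau_j-\xi_j^2|\sim L_j\ (j=1,2,3),\ |\xi_1^2+\xi_2^2+\xi_3^2|\sim H\Big\},
\]
and the plan is to bound $\|\chi_R\|_{\mathcal M}$ by a fibre--counting argument, after recording two preliminaries. First, dualizing \eqref{mnorm}, parametrizing $\Gamma$ by two of the three pairs $(\tau_j,\xi_j)$ and applying Cauchy--Schwarz twice, one obtains, for each index $k_0$,
\[
\|\chi_R\|_{\mathcal M}\ \lesssim\ \Big(\,\sup_{(\tau_{k_0},\xi_{k_0})}\big|R_{(\tau_{k_0},\xi_{k_0})}\big|\,\Big)^{1/2},
\]
where $R_{(\tau_{k_0},\xi_{k_0})}\subset\mathbf R^2$ is the slice of $R$ above a fixed value of $(\tau_{k_0},\xi_{k_0})$, viewed in the plane of a second pair $(\tau_k,\xi_k)$ (the third then being determined on $\Gamma$). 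Second, writing $\sigma_j=\tau_j-\xi_j^2$, the constraint $\tau_1+\tau_2+\tau_3=0$ yields the resonance identity $\sigma_1+\sigma_2+\sigma_3=-(\xi_1^2+\xi_2^2+\xi_3^2)$, whence $H\lesssim L_{\max}$ on $R$; together with $\xi_1+\xi_2+\xi_3=0$ (which forces $N_{\max}\sim N_{\med}$ and $H\sim N_{\max}^2$), this is exactly the bookkeeping assumed. By the permutation symmetry of \eqref{mult} in the $(+,+,+)$ case we may relabel indices at will.

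The core estimate, valid in every case, is to pick $k_0$ with $L_{k_0}=L_{\max}$ and freeze $(\tau_{k_0},\xi_{k_0})$. On the slice $\sigma_{k_0}$ is constant, while the two remaining modulations are $\le L_{\med}$, so $\sigma_k+\sigma_{k'}$ --- and hence $\xi_1^2+\xi_2^2+\xi_3^2=-\sigma_{k_0}-(\sigma_k+\sigma_{k'})$ --- lies in an interval $J$ with $|J|\lesssim L_{\med}$. As a function of the free variable $\xi_k$ (with $\xi_{k'}=-\xi_k-\xi_{k_0}$) one computes
\[
\xi_1^2+\xi_2^2+\xi_3^2=2\big(\xi_k+\tfrac12\xi_{k_0}\big)^2+\tfrac32\,\xi_{k_0}^2 ,
\]
a quadratic in $\xi_k$ with leading coefficient $2$; the $\xi_k$--preimage of an interval of length $\lesssim L_{\med}$ under such a map has measure $\lesssim L_{\med}^{1/2}$ (the sole square--root loss). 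For each such $\xi_k$, the conditions $|\sigma_k|\sim L_k$, $|\sigma_{k'}|\sim L_{k'}$, both affine in $\tau_k$ with slopes $\pm1$, confine $\tau_k$ to an interval of length $\lesssim\min(L_k,L_{k'})=L_{\min}$. Hence $|R_{(\tau_{k_0},\xi_{k_0})}|\lesssim L_{\min}L_{\med}^{1/2}$ and $\|\chi_R\|_{\mathcal M}\lesssim L_{\min}^{1/2}L_{\med}^{1/4}$, which is \eqref{ppp2} and settles the exceptional case (1) (where $L_{\med}\le L_{\max}\sim H\sim N_{\max}^2$).

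For \eqref{ppp1} in case (2) one must improve the $\xi_k$--measure by a factor $N_{\max}^{-1/2}$, and this is where the two halves of the hypothesis ``$N_{\min}\ll N_{\max}$ or $L_{\max}\nsim H$'' enter. If $N_{\min}\ll N_{\max}$, the two frequencies $\xi_k,\xi_{k'}$ occurring in the slice are separated --- either both are $\sim N_{\max}$ with $\xi_k\approx-\xi_{k'}$, or one is $\sim N_{\min}$ and one is $\sim N_{\max}$ --- so $\big|\tfrac{d}{d\xi_k}(\xi_1^2+\xi_2^2+\xi_3^2)\big|=2|\xi_k-\xi_{k'}|\sim N_{\max}$ on the slice; the $\xi_k$--preimage of $J$ then has measure $\lesssim|J|/N_{\max}\le\min(H,L_{\med})/N_{\max}\le L_{\med}/N_{\max}$, and with the $\tau_k$--bound this gives $|R|\lesssim L_{\min}L_{\med}/N_{\max}$, i.e. \eqref{ppp1}. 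In the remaining regime of case (2) --- $N_{\min}\sim N_{\max}$ and $L_{\max}\nsim H$ --- the relations $H\lesssim L_{\max}$, $H\sim N_{\max}^2$ and $L_{\max}\sim\max(H,L_{\med})$ force $L_{\med}\sim L_{\max}\gg N_{\max}^2$, so that $L_{\med}^{1/4}\le L_{\med}^{1/2}N_{\max}^{-1/2}$ and the bound $L_{\min}^{1/2}L_{\med}^{1/4}$ already proved is no weaker than \eqref{ppp1}; nothing further is needed.

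The main obstacle I anticipate is not the measure computations --- those are elementary once set up --- but arranging the case analysis so that each piece uses the freezing that makes the resonance function $\xi\mapsto\xi_1^2+\xi_2^2+\xi_3^2$ either tightly confined (to an interval $J$ with $|J|\lesssim L_{\med}$, which requires $L_{k_0}=L_{\max}$) or non-degenerate in the free variable (which requires the vertex $\xi_k=-\tfrac12\xi_{k_0}$ of the displayed quadratic to fall outside, or deep inside, the annulus $|\xi_k|\sim N_k$ --- precisely the separation $N_{\min}\ll N_{\max}$). Checking that these two mechanisms together exhaust all configurations, and that the square-root loss incurred in the fully resonant, all-frequencies-comparable case is exactly absorbed by $L_{\max}\sim H\sim N_{\max}^2$, is the delicate point; the configuration $N_{\min}\sim N_{\max}$, $L_{\max}\sim H$ is genuinely the worst and is what accounts for the weaker exponent $L_{\med}^{1/4}$ in \eqref{ppp2}.
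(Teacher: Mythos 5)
Your argument is correct. Note that the paper does not actually prove this proposition --- it is quoted verbatim as Proposition 11.1 of Tao's multilinear weighted convolution paper \cite{Tao1} --- so the relevant comparison is with Tao's proof (and with the paper's proof of \eqref{ppm2}, which uses the same machinery). Your reconstruction follows essentially that route: the Cauchy--Schwarz/slice bound $\|\chi_R\|_{\mathcal M}\lesssim\sup|R_{\rm slice}|^{1/2}$ is Tao's Corollary 3.10, the resonance identity $\sigma_1+\sigma_2+\sigma_3=-(\xi_1^2+\xi_2^2+\xi_3^2)$ is the standard bookkeeping, and the dichotomy between the degenerate quadratic (vertex inside the frequency annulus, giving the $L_{\med}^{1/4}$ loss of \eqref{ppp2}) and the non-degenerate one ($|\xi_k-\xi_{k'}|\sim N_{\max}$, giving the $N_{\max}^{-1/2}$ gain of \eqref{ppp1}) is exactly Tao's case analysis. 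The one organizational choice worth highlighting as clean and correct is that you always freeze the index carrying $L_{\max}$ --- this is forced, since only then is the resonance function confined to an interval of length $\lesssim L_{\med}$ on the slice --- and you correctly observe that in the residual regime $N_{\min}\sim N_{\max}$, $L_{\max}\nsim H$ one has $L_{\med}\sim L_{\max}\gg N_{\max}^2$, so the universal bound $L_{\min}^{1/2}L_{\med}^{1/4}$ already implies \eqref{ppp1} and no new slice estimate is needed. Your version dispenses with the box-localization step (Tao's Corollary 3.13) that the paper invokes for \eqref{ppm2}; it is not needed here because the frozen point automatically lies in the support of the corresponding factor, so the slice computation goes through directly. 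I find no gap.
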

The following Proposition covers the other important case - namely, when not all 
$\ve_j, j=1,2,3$ match. Note that while almost all the cases below already appear in the work of Tao, \cite{Tao1}, there is the estimate \eqref{ppm2}, which is not stated explicitly\footnote{although it is implicit in the arguments in \cite{Tao1}} 
 (and shall be crucial for us in the sequel). 
\begin{proposition}[$(+,+,-)$ case]\label{ppm}
Let $H,N_1,N_2,N_3,L_1,L_2,L_3>0$ and $(\ve_1,\ve_2,\ve_3) =(+1,+1,-1)$ in \eqref{mult}.  If $N_{\max} \sim N_{med}$, $H\sim N_1N_2$ and $L_{\max} \sim \max(H,L_{\med})$, then 
there is an absolute constant $C$, so that 

\begin{enumerate}
\item For all cases, 
\begin{equation}\label{ppm1}
\eqref{mult} \leq C L_{\min}^{\frac{1}{2}} N_{\min}^{\frac{1}{2}}
\end{equation}

\item For all cases,
\begin{equation}\label{ppm2}
\eqref{mult} \leq C \min \left( \frac{L_1 L_3}{N_2}, \frac{L_2 L_3}{N_1}\right)^{\frac{1}{2}}
\end{equation}

\item If $L_3=L_{\max}$ 
\begin{enumerate}
\item and $N_1 \sim N_2 \sim N_3$ holds, 
\begin{equation}\label{ppm4}
\eqref{mult} \leq C L_{\min}^{\frac{1}{2}}L_{\med}^{\frac{1}{4}}.
\end{equation}

\item and $N_1\sim N_2 \sim N_3$ does not hold, 
\begin{equation}\label{ppm3}
\eqref{mult} \leq C \frac{L_{\min}^{\frac{1}{2}}L_{\med}^{\frac{1}{2}}}{N_{\max}^{\frac{1}{2}}}
\end{equation}

\end{enumerate}
\end{enumerate}
\end{proposition}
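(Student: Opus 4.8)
Throughout, the quantity in \eqref{mult} is $\|\mathbf 1_R\|_{\mathcal M}$, where $R\subseteq\Gamma$ is the region cut out by the indicator factors, so the entire proof is a matter of estimating the planar measure of the fibers of $R$. The plan is the standard one: freeze the two Fourier variables $(\xi_i,\tau_i)$ of one of the three factors, apply Cauchy--Schwarz first on the resulting two-dimensional fiber (in the variables of a second factor) and then in the frozen variables, and use that $(\xi_i,\tau_i,\xi_j,\tau_j)\mapsto(\xi_i,\tau_i)$ is measure preserving on $\Gamma$. This yields $\|\mathbf 1_R\|_{\mathcal M}\lesssim\big(\sup|R_{\mathrm{fib}}|\big)^{1/2}$, the supremum being over the frozen variables. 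The sign pattern $(\ve_1,\ve_2,\ve_3)=(+,+,-)$ enters only through two identities valid on $\Gamma$: $\xi_1^2+\xi_2^2-\xi_3^2=-2\xi_1\xi_2$ (hence $H\sim N_1N_2$) and $(\tau_1-\xi_1^2)+(\tau_2-\xi_2^2)+(\tau_3+\xi_3^2)=2\xi_1\xi_2$ (hence $L_{\max}\sim\max(H,L_{\med})$).

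For \eqref{ppm1} the crudest fiber bound suffices: freezing one factor and pairing the other two, the $\xi$-fiber has measure $\lesssim\min(N_j,N_k)$, and for each frozen $\xi$ the $\tau$-fiber is the intersection of two intervals, of measure $\lesssim\min(L_j,L_k)$; hence $\|\mathbf 1_R\|_{\mathcal M}\lesssim\big(\min(N_j,N_k)\min(L_j,L_k)\big)^{1/2}$ for every pairing. The hypothesis $N_{\max}\sim N_{\med}$ ensures that some pair realizes $\min(N_j,N_k)=N_{\min}$, and minimizing over the (at most two) such pairs brings in $L_{\min}$, which is \eqref{ppm1}.

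The one genuinely new estimate is \eqref{ppm2}, and it is here that the $(+,+,-)$ structure is used decisively. Freeze the variables of $u_2$ and pair $u_1$ with $u_3$. On the $(\xi_1,\tau_1)$-fiber, adding the two modulation windows $|\tau_1-\xi_1^2|\lesssim L_1$ and $|\tau_3+\xi_3^2|=|{-\tau_1-\tau_2}+(\xi_1+\xi_2)^2|\lesssim L_3$ produces the single relation $|{-\tau_2}+\xi_2^2+2\xi_1\xi_2|\lesssim L_1+L_3$; since $\tau_2,\xi_2$ are frozen this confines $\xi_1$ to an interval of length $\lesssim(L_1+L_3)/|\xi_2|\sim(L_1+L_3)/N_2$, while the $\tau_1$-fiber still has measure $\lesssim\min(L_1,L_3)$. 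Thus $|R_{\mathrm{fib}}|\lesssim\min(L_1,L_3)(L_1+L_3)/N_2\lesssim L_1L_3/N_2$; freezing $u_1$ and pairing $u_2$ with $u_3$ gives the symmetric bound $L_2L_3/N_1$, and \eqref{ppm2} follows by taking the minimum.

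For \eqref{ppm4} and \eqref{ppm3}, where $L_3=L_{\max}$ and so $\{L_1,L_2\}=\{L_{\min},L_{\med}\}$, I would freeze the variables of $u_3$ and pair the two matching factors $u_1,u_2$. Adding $|\tau_1-\xi_1^2|\lesssim L_1$ and $|\tau_2-\xi_2^2|\lesssim L_2$ gives $|2(\xi_1+\xi_3/2)^2+\xi_3^2/2+\tau_3|\lesssim L_1+L_2\sim L_{\med}$, so $(\xi_1+\xi_3/2)^2$ lies in a fixed interval of length $\sim L_{\med}$, while the $\tau_1$-fiber has measure $\lesssim\min(L_1,L_2)=L_{\min}$. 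In general this pins $\xi_1$ to a set of measure $\lesssim\min(N_1,L_{\med}^{1/2})$, the loss to $L_{\med}^{1/2}$ occurring only in the near-vertex regime $\xi_1\approx-\xi_3/2$, and hence $\|\mathbf 1_R\|_{\mathcal M}\lesssim L_{\min}^{1/2}L_{\med}^{1/4}$, which is \eqref{ppm4}. When $N_1\sim N_2\sim N_3$ fails, i.e. $N_{\min}\ll N_{\max}\sim N_{\med}$, a short case check (on which of $N_1,N_2,N_3$ is smallest; in the sub-case where $\xi_2$ is the small frequency one uses $\xi_1\approx-\xi_3$, so $\xi_1+\xi_3/2\approx-\xi_3/2$) shows $|\xi_1+\xi_3/2|\sim N_{\max}$ is forced; since $x\mapsto x^2$ has derivative of size $N_{\max}$ there, $\xi_1$ is pinned to measure $\lesssim\min(N_1,L_{\med}/N_{\max})$, yielding the sharper $\|\mathbf 1_R\|_{\mathcal M}\lesssim L_{\min}^{1/2}L_{\med}^{1/2}N_{\max}^{-1/2}$, i.e. \eqref{ppm3}. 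This last paragraph is the main obstacle: the parabola-curvature counting must be done carefully, both to see that the near-vertex region costs only $L_{\med}^{1/2}$ and no power of $N$, and to verify the forced bound $|\xi_1+\xi_3/2|\gtrsim N_{\max}$ in every sub-case of \eqref{ppm3}. As the remark preceding the Proposition indicates, once these reductions are in place the estimates \eqref{ppm1}, \eqref{ppm4} and \eqref{ppm3} are essentially those of Tao \cite{Tao1}.
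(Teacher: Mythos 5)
Your proposal is correct and, for \eqref{ppm2} (the only part the paper actually proves rather than cites), it is essentially the paper's argument: summing the two modulation constraints to pin the free frequency to an interval of length $\max(L_i,L_3)/N_j$ while the free $\tau$ lies in an interval of length $\min(L_i,L_3)$, then Cauchy--Schwarz on the fiber --- the paper merely routes this through Tao's box-localization corollary instead of freezing a variable directly, and freezes index $1$ where you freeze index $2$, with the symmetric case handled identically in both. For \eqref{ppm1}, \eqref{ppm4} and \eqref{ppm3} the paper simply defers to \cite{Tao1}, and your sketches (the crude fiber count, and the completed square $\xi_1^2+\xi_2^2=2(\xi_1+\xi_3/2)^2+\xi_3^2/2$ with the curvature/derivative dichotomy for $|\xi_1+\xi_3/2|$) correctly reproduce Tao's arguments.
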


\begin{proof}
We will only prove \eqref{ppm2}.  For the others, we refer to \cite{Tao1}.\\

Define the following sets.

\begin{align*}
A_1 &= \{ (\tau,\xi)\in \rtwo: \xi \sim N_1, \tau-\xi^2 \sim L_1 \}\\
A_2 &= \{ (\tau,\xi)\in \rtwo: \xi \sim N_2, \tau-\xi^2 \sim L_2 \}\\
A_3 &= \{ (\tau,\xi)\in \rtwo: \xi \sim N_3, \tau+\xi^2 \sim L_3 \}
\end{align*}

To prove \eqref{ppm2}, let $N_j =N_{\min}$.  Define $R =\{ (\tau,\xi)\in \rtwo :  |\xi| \leq \varepsilon N_{\min}\}$.  Then we can find $m=O(1/\varepsilon)$ numbers $\xi_j^0 \sim N_j$, so that the sets of type $(0,\xi_j^0) + R$ covers the set $A$.  Then we can apply the Box Localization (Corollary 3.13 of \cite{Tao1}) so that 
\[
\eqref{mult} \leq C \| \prod_{k=1}^3 \chi_{A_k\cap[(0,\xi_k^0) + R]} (\tau_k,\xi_k)\|_{\mathcal{M}}
\]
for some $\xi_k^0 \in A_k$ so that $\xi_1^0 + \xi_2^0 + \xi_3^0 \leq \varepsilon N_{\min}$.  Denote $A_k^0 = A_k\cap[(0,\xi_k^0) +R]$.  Now by comparison principle and Corollary 3.10 of \cite{Tao1},
\begin{align*}
\eqref{mult} &\leq C \| \prod_{k=2}^3 \chi_{A_k^0} (\tau_k,\xi_k)\|_{\mathcal{M}}\\
	&\leq C |\{ (\tau_2,\xi_2) \in A_2^0: (\tau,\xi) - (\tau_2,\xi_2) \in A_3^0 \}|^{\frac{1}{2}}
\end{align*}
for some $(\tau,\xi) \in A_1 + 2R$.  Note that $\xi \sim N_1$ for $\varepsilon>0$ small.\\

We have $\tau_2 =\xi_2^2+O(L_2)$ and $\tau - \tau_2 = -(\xi-\xi_2)^2 +O(L_3)$.  First we can remove $\tau_2$ by restricting it to an interval of length at most $O(\min(L_2,L_3))$ for a fixed $\xi_2$.  Furthermore, these restrictions give $\xi_2^2 - (\xi -\xi_2)^2 = \tau + O(\max(L_2,L_3))$; that is $2\xi \xi_2 = \tau+\xi^2 +O(\max(L_2,L_3))$.  So

\[
\eqref{mult} \leq C [\min(L_2,L_3) |\{ \xi_2 \sim N_2 : \xi_2 = \frac{\tau +\xi^2}{2\xi} + O(\max(L_2,L_3)/\xi)\}| \, ]^{\frac{1}{2}}.
\]

Clearly, $\xi_2$ above is contained in an interval of length at most $O(\max(L_2,L_3)/N_1)$.  So we get the estimate
\[
\eqref{mult} \leq C \left(\frac{L_2L_3}{N_1}\right)^{\frac{1}{2}}.
\]

By reversing the role of $A_1$, $A_2$ and following the same arguments, we also obtain
\[
\eqref{mult} \leq C \left(\frac{L_1L_3}{N_2}\right)^{\frac{1}{2}}.
\]

This proves \eqref{ppm2}.
\end{proof}

We now apply Proposition \ref{prop:5} and Proposition \ref{ppm} to deduce some important bilinear estimates, which will be one of the main tools in the sequel. 
\begin{lemma}\label{estimates}
Let $T>0$ and $u,v\in \mathcal{S}((-T,T)\times\rone)$.  
Then for $\delta>0$, $k>0$, and some $C= C(\delta, T)$;
\begin{eqnarray}
 \label{gain1}
& & \|(u_{k} v_{\ll k})_{\sim k}\|_{L_{t,x}^2} \leq C\, 2^{-(\frac{1}{2}-\delta)k } 
\|u\|_{X^{0,\frac{1}{2}+\delta}} \|v\|_{X^{0,\frac{1}{2}+\delta}}
\\
\label{gain2}
 & &  \|(u_{k} v_{\sim k})_{\ll k}\|_{L_{t,x}^2} \leq C \,2^{-(\frac{1}{2}-\delta)k} \|u\|_{X^{0,\frac{1}{2}+\delta}} \|v\|_{X^{0,\frac{1}{2}+\delta}}\\
 \label{gain3}
& &   \|u_{k} v\|_{L_{t,x}^2} \leq C \|u\|_{X_T^{0,\frac{1}{2}+\delta}} 
\|v\|_{X_T^{0,\frac{1}{2}+\delta}}
\end{eqnarray}
In addition, there are the following estimates concerning the bilinear form $(u,v)\to u\bar{v}$ 
\begin{eqnarray}
\label{kkk1}
 & &  \|(u_k \overline{v_k})_k\|_{L^2_{t x}}  \leq C\, 2^{-(\frac{1}{2}-\delta)k}
 \|u\|_{X^{0,\frac{1}{2}+\delta}}\|v\|_{X^{0,\frac{1}{2}+\delta}}\\
 \label{kkk2}
  & & \|(u_k \overline{v_{\ll k}})_{k}\|_{L^2_{t x}}  \leq C\, 2^{-(\frac{1}{2}-\delta)k}\|u\|_{X^{0,\frac{1}{2}+\delta}}\|v\|_{X^{0,\frac{1}{2}+\delta}}\\
  \label{kkk3}
  & & 
   \| u_k \overline{v}\|_{L^2_{t x}}  \leq C
   \|u\|_{X_T^{0,\frac{1}{2}+\delta}}  \|v\|_{X_T^{0,\frac{1}{2}+\delta}}
\end{eqnarray}
\end{lemma}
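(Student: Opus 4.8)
The plan is to reduce each of the six estimates in Lemma~\ref{estimates} to the bilinear $L^2$ multiplier bounds of Proposition~\ref{prop:5} and Proposition~\ref{ppm}, via the duality formula \eqref{uv}. Concretely, for a product like $(u_k v_{\ll k})_{\sim k}$ one writes the $L^2_{t,x}$ norm against a test function $w$ supported at frequency $\sim 2^k$, decomposes each of the three factors into Littlewood--Paley pieces in the modulation variable (i.e.\ writes $u=\sum_{L_1} u_{N_1,L_1}$ etc.), and thereby bounds the norm by $\sum m$-multiplier norms of the form \eqref{mult} with $(\ve_1,\ve_2,\ve_3)=(+,+,-)$ (the minus coming from the fact that in \eqref{uv} one pairs with $\widehat w(-\tau,-\xi)$, i.e.\ $w$ lives on $\tau=-\xi^2$). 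For the estimates \eqref{kkk1}--\eqref{kkk3} involving $u\bar v$, the relation \eqref{bar} converts $\bar v$ into a factor living on $\tau=-\xi^2$, so these also fall under the $(+,+,-)$ configuration of Proposition~\ref{ppm}. I would set up the frequency bookkeeping once: in \eqref{gain1} we have $N_1\sim 2^k$, $N_3\sim 2^k$, $N_2=2^j$ with $j\ll k$, hence $N_{\min}\sim 2^j$, $N_{\max}\sim N_{\med}\sim 2^k$, and $H\sim N_1 N_2\sim 2^{k+j}$.

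The core of the argument is then a summation over the modulation parameters $L_1,L_2,L_3$, each of which is $\gtrsim 1$, subject to the resonance constraint built into \eqref{mult}, namely $L_{\max}\sim\max(H,L_{\med})$. For the high-low cases \eqref{gain1}, \eqref{gain2}, \eqref{kkk1}, \eqref{kkk2} I would use bound \eqref{ppm2}, i.e.\ $\eqref{mult}\le C(\min(L_1L_3/N_2,\,L_2L_3/N_1))^{1/2}$ together with the trivial bound \eqref{ppm1} $\eqref{mult}\le C L_{\min}^{1/2}N_{\min}^{1/2}$ when it is more favorable; after multiplying by the $X^{0,1/2+\de}$ weights $L_j^{1/2+\de}$ and summing, the powers of $L_j$ telescope (the geometric series converge because of the constraint that forces $L_{\max}$ large once $H$ is large, producing the needed decay), leaving a net power $2^{-(1/2-\de)k}$ coming from the $N_{\max}^{-1/2}$ or $N_j/N_i$ gains. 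For the crude estimate \eqref{gain3} and \eqref{kkk3}, where $v$ is not frequency-localized, one sums over all dyadic blocks of $v$ and uses the non-exceptional bound \eqref{ppp1}/\eqref{ppm3} (or \eqref{ppm1}), which always gives at worst an $O(1)$ constant after summation in the output frequency --- equivalently this is just the statement $X^{0,1/2+\de}\cdot X^{0,1/2+\de}\hookrightarrow L^2_{t,x}$, which follows from \eqref{a:1} with the Strichartz pair $(q,r)=(4,\infty)$ interpolated appropriately, or directly from $L^4_tL^4_x\cdot L^4_tL^4_x\subset L^2_tL^2_x$.

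The step I expect to be the main obstacle is the clean summation over $L_1,L_2,L_3$ in the high-high/high-low cases while keeping track of \emph{which} case of Proposition~\ref{ppm} applies --- in particular correctly handling the dichotomy ``$L_3=L_{\max}$ vs.\ $L_3\ne L_{\max}$'' and the sub-dichotomy $N_1\sim N_2\sim N_3$ (cases (3)(a),(b)), since the exceptional bound \eqref{ppm4} has a worse $L_{\med}^{1/4}$ factor than the generic $N_{\max}^{-1/2}$-gain and one must verify the summation still closes with the claimed exponent $2^{-(1/2-\de)k}$. A secondary subtlety is that in the ``$u\bar v$'' estimates the effective dispersion relation is $\ve_1\xi_1^2+\ve_2\xi_2^2-\xi_3^2$ with $H\sim N_1N_2$ rather than $H\sim N_{\max}^2$, so the resonant set is genuinely larger and one cannot simply quote the $(+,+,+)$ Proposition; this is exactly why \eqref{ppm2} (the estimate the authors single out as ``crucial'') is needed, and I would make sure to invoke it rather than \eqref{ppm1} whenever $N_2$ (resp.\ $N_1$) is the large frequency, since only \eqref{ppm2} produces the $N^{-1/2}$ gain in that regime.
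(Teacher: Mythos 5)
Your treatment of \eqref{kkk1}--\eqref{kkk2} (the $(+,+,-)$ configuration with the key bound \eqref{ppm2}) and of the crude estimates \eqref{gain3}, \eqref{kkk3} (H\"older plus a Strichartz embedding on the bounded time interval; the paper uses the pair $(8,4)$ with a factor $T^{1/4}$) matches the paper. The gap is in \eqref{gain1} and \eqref{gain2}. For the product $u\cdot v$ with \emph{both} factors near $\tau=\xi^2$, the correct configuration is $(+,+,+)$, not $(+,+,-)$: since the dual function is merely an $L^2_{\tau,\xi}$ function, you are free to decompose it with respect to $|\tau-\xi^2|$, and then the resonance function is $\xi_1^2+\xi_2^2+\xi_3^2\sim N_{\max}^2$ on the hyperplane, so $L_{\max}\gtrsim N_{\max}^2$, and the single non-exceptional bound \eqref{ppp1} of Proposition~\ref{prop:5} (applicable because $N_{\min}\ll N_{\max}$ rules out \eqref{ppp2}) already gives
$\sum L_{\min}^{1/2}L_{\med}^{1/2}N_{\max}^{-1/2}L_1^{-1/2-\de}L_2^{-1/2-\de}\lesssim 2^{-(\frac{1}{2}-\de)k}$
with no case analysis. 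This is what the paper does.

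Your choice to place the dual function on $\tau=-\xi^2$ yields only $H\sim N_1N_2\sim 2^{k+j}$, and then the toolkit you commit to for these two estimates --- \eqref{ppm2} supplemented by \eqref{ppm1} --- genuinely fails: in the region $L_1\sim L_2\sim 1$, $L_3\sim N_1N_2$ (which is exactly what the constraint $L_{\max}\sim\max(H,L_{\med})$ forces), both bounds give only $\min(L_1L_3/N_2,\,L_2L_3/N_1)^{1/2}= L_{\min}^{1/2}N_{\min}^{1/2}=2^{j/2}\geq 1$, with no modulation weights left to spend, whereas you need $2^{-(\frac{1}{2}-\de)k}$. The route can be rescued by invoking \eqref{ppm3} when $L_3=L_{\max}$ (which does give $L_{\min}^{1/2}L_{\med}^{1/2}N_{\max}^{-1/2}\sim 2^{-k/2}$ there), but that is precisely the case distinction you flagged as the obstacle without resolving, and it is avoidable: the $(+,+,+)$ framework removes the difficulty entirely. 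Relatedly, your explanation of the minus sign (``because one pairs with $\widehat{w}(-\tau,-\xi)$'') conflates two different things: for $u\bar v$ the minus comes from $\bar v$ itself living on $\tau=-\xi^2$ via \eqref{bar}, and it is this --- resonance of size $N_1N_2$ for $u\bar v$ versus $N_{\max}^2$ for $uv$ --- that makes \eqref{kkk1}--\eqref{kkk2} the genuinely hard cases requiring \eqref{ppm2}, while \eqref{gain1}--\eqref{gain2} are soft.
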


\begin{proof}
We first dispense with the easy estimates \eqref{gain3} and \eqref{kkk3}. Indeed, taking into account the boundedness of $P_{\sim k}$ and $P_{<<k}$ on all $L^p$ spaces, we estimate both expressions by H\"older's and \eqref{a:1} 
$$
C T^{1/4} \|u\|_{L^8_T L^4_x}  \|v\|_{L^8_T L^4_x}  \leq 
C_\de T^{1/4} \|u\|_{X^{0, \f{1}{2}+\de}}  \|v\|_{X^{0, \f{1}{2}+\de}}, 
$$
since $q=8, r=4$ is a Strichartz pair. 

For the estimates \eqref{gain1} and  \eqref{gain2}, we  use Proposition~\ref{ppm}.  We use the partition of unity $\chi^{(+,+,+)}$, where $N_1, N_2,N_3$ indicates the respective frequencies of $u, v, uv$.  Denote by $\sum$, summation over $N_1, N_2, N_3, L_1, L_2, L_3 \geq 1$.  
Note that $N_{\max} \sim 2^k$ and the relation
 $L_{\max} \sim \max(L_{\med}, N_{\max}^2)$, which holds trivially 
 by the constraints, see \cite{Tao1}. 

For \eqref{gain1}, we apply \eqref{ppp1} to obtain
\begin{align*}
 \|(u_{k} v_{\ll k})_{\sim k}\|_{L_{t,x}^2} &= \sup_{\|w\|_{L^2_{t,x}}=1}\left| \int_{\rone\times \rone} u_k(t,x) v_{\ll k}(t,x) w_{\sim k}(t,x) \, dt\, dx\right|\\
 	&\leq C \sum \frac{L_{\min}^{\frac{1}{2}} L_{\med}^{\frac{1}{2}}}{N_{\max}^{\frac{1}{2}}} \|u_k\|_{L^2_{t,x}} \|v_{\ll k}\|_{L^2_{t,x}}\\
 	&\leq C \sum \frac{1}{L_1^{\frac{1}{2}+\delta} L_2^{\frac{1}{2}+\delta} } \frac{L_{\min}^{\frac{1}{2}} L_{\med}^{\frac{1}{2}}}{N_{\max}^{\frac{1}{2}}} \|u\|_{X^{0,\frac{1}{2}+\delta}} \|v\|_{X^{0,\frac{1}{2}+\delta}} \\
 	&\leq C \sum \frac{1}{L_{\med}^{\delta} N_{\max}^{\frac{1}{2}} }  \|u\|_{X^{0,\frac{1}{2}+\delta}} \|v\|_{X^{0,\frac{1}{2}+\delta}} \\
	&\leq C_{\delta} 2^{\left(-\frac{1}{2}+\delta\right)k} \|u\|_{X^{0,\frac{1}{2}+\delta}} \|v\|_{X^{0,\frac{1}{2}+\delta}}.
\end{align*}
\eqref{gain2} is estimated exactly the same way as \eqref{gain1}.

To prove \eqref{kkk1}and \eqref{kkk2}, we use Proposition~\ref{ppm}.  We use the partition of unity $\chi^{(+,+,-)}$, where $N_1, N_2,N_3$ indicates the respective frequencies of $u,  u\overline{v}, \overline{v}$.  Denote by $\sum$, summation over $N_1, N_2, N_3, L_1, L_2, L_3 \geq 1$.  Note $L_{\max} \sim \max(L_{\med}, N_1 N_2)$ and $N_{1} \sim 2^k$.\\

For both \eqref{kkk1} and \eqref{kkk2}, $N_2\sim N_{\max}\sim 2^k$.  Since the calculations will be almost identical, we will only prove \eqref{kkk1} here.  We apply \eqref{ppm2} to obtain  
\begin{align*}
 \|(u_k \overline{v_{\sim k}})_{\sim k}\|_{L^2} &= \sup_{\|w\|_{L^2}=1} \left|\int_{\rone\times \rone} u_k (t,x) \overline{v_{\sim k}} (t,x) w_{\sim k} (t,x)\, dt\, dx \right| \\
 	&\leq C \sum  \frac{L_1^{\frac{1}{2}} L_3^{\frac{1}{2}}}{N_2^{\frac{1}{2}}} \|u_k\|_{L^2_{t,x}} \|\overline{v_{\sim k}}\|_{L^2_{t,x}}\\ 
 	&\leq C 2^{\left(-\frac{1}{2}+\delta\right)k}  \|u\|_{X^{0,\frac{1}{2}+\delta}} \|v\|_{X_{\tau=\xi^2}^{0,\frac{1}{2}+\delta}}
 \end{align*}
 	\end{proof}
 	We now provide a technical corollary, which allows us to put $\|v\|_{X^{0,\f{1}{2}-\de}}$ norms on the right hand sides of \eqref{kkk1}, \eqref{kkk2} and \eqref{kkk3}, at the expense of slightly less gain in the power of $2^k$. 
 	\begin{corollary}
 	\label{cor:a1} 
 	With the assumptions in Lemma \ref{estimates}, we have 
 	\begin{eqnarray}
\label{kkkk1}
 & &  \|(u_k \overline{v_k})_k\|_{L^2_{t x}}  \leq C\, 2^{-k(\frac{1}{2}-5\delta)}
 \|u\|_{X^{0,\frac{1}{2}+\delta}}\|v\|_{X^{0,\frac{1}{2}-\delta}}\\
 \label{kkkk2}
  & & \|(u_k \overline{v_{\ll k}})_{k}\|_{L^2_{t x}}  \leq C\, 2^{-k(\frac{1}{2}-5\delta)}\|u\|_{X^{0,\frac{1}{2}+\delta}}\|v\|_{X^{0,\frac{1}{2}-\delta}}\\
  \label{kkkk3}
  & & 
   \|u_k \overline{v}\|_{L^2_{t x}}  \leq C 2^{2\de k} 
   \|u\|_{X_T^{0,\frac{1}{2}+\delta}}  \|v\|_{X_T^{0,\frac{1}{2}-\delta}}
\end{eqnarray}
 	\end{corollary}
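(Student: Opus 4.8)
The plan is to split the second factor $v$ according to the size of its modulation $|\tau-\xi^2|$ against a suitable threshold $\La$ and to estimate the two pieces by different means: the low--modulation piece will be fed into Lemma~\ref{estimates} after trading the $X^{0,\f{1}{2}+\de}$--norm for the weaker $X^{0,\f{1}{2}-\de}$--norm at the cost of $\La^{2\de}$, while the high--modulation piece, which has tiny $L^2$--mass, will be absorbed by a crude bilinear bound. For $\La>0$ write $v=v^{<}+v^{>}$ with $\widetilde{v^{<}}=\chi_{\{|\tau-\xi^2|<\La\}}\widetilde v$ and $\widetilde{v^{>}}=\chi_{\{|\tau-\xi^2|\geq\La\}}\widetilde v$. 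Inspecting the weights $(1+|\tau-\xi^2|)^{1\pm2\de}$ that define $X^{0,\f{1}{2}\pm\de}$ gives the two elementary bounds $\|v^{<}\|_{X^{0,\f{1}{2}+\de}}\leq C\La^{2\de}\|v\|_{X^{0,\f{1}{2}-\de}}$ and $\|v^{>}\|_{L^2_{t,x}}\leq C\La^{-(\f{1}{2}-\de)}\|v\|_{X^{0,\f{1}{2}-\de}}$ (the latter also with $P_kv^{>}$ in place of $v^{>}$). The crude bound I will use is: for every $g$,
\begin{equation*}
\|u_k\overline{g}\|_{L^2_{t,x}}\leq C\,2^{\f{k}{2}}\|u\|_{X^{0,\f{1}{2}+\de}}\|g\|_{L^2_{t,x}},
\end{equation*}
obtained by discarding the (harmless) outer Littlewood--Paley projection, then H\"older $\|u_k\overline g\|_{L^2}\leq\|u_k\|_{L^\infty_{t,x}}\|g\|_{L^2_{t,x}}$, Bernstein $\|u_k\|_{L^\infty_{t,x}}\leq C2^{k/2}\|u_k\|_{L^\infty_tL^2_x}$, and the embedding $X^{0,\f{1}{2}+\de}\hookrightarrow L^\infty_tL^2_x$ (a standard property of $X^{s,b}$ spaces, cf.\ Proposition~\ref{xsb}).

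For \eqref{kkkk1} and \eqref{kkkk2} I take $\La=2^{2k}$. Since $P_kv=P_k v^{<}+P_kv^{>}$, the left-hand side splits accordingly. On the low piece, $\|v^{<}\|_{X^{0,\f{1}{2}+\de}}\leq C2^{4\de k}\|v\|_{X^{0,\f{1}{2}-\de}}$, so \eqref{kkk1} (resp.\ \eqref{kkk2}) applied with $v^{<}$ in place of $v$ contributes $\leq C\,2^{-(\f{1}{2}-\de)k}2^{4\de k}\|u\|_{X^{0,\f{1}{2}+\de}}\|v\|_{X^{0,\f{1}{2}-\de}}=C\,2^{-k(\f{1}{2}-5\de)}\|u\|_{X^{0,\f{1}{2}+\de}}\|v\|_{X^{0,\f{1}{2}-\de}}$. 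On the high piece, $\|P_kv^{>}\|_{L^2_{t,x}}\leq C2^{-(1-2\de)k}\|v\|_{X^{0,\f{1}{2}-\de}}$, and dropping the outer projection and invoking the crude bound gives $\leq C\,2^{k/2}2^{-(1-2\de)k}\|u\|\|v\|=C\,2^{-k(\f{1}{2}-2\de)}\|u\|\|v\|\leq C\,2^{-k(\f{1}{2}-5\de)}\|u\|\|v\|$. Summing proves \eqref{kkkk1}; the proof of \eqref{kkkk2} is word for word the same. For \eqref{kkkk3}, where \eqref{kkk3} carries no power of $2^k$, the threshold must be lowered — I take $\La=2^{k}$. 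Then $\|v^{<}\|_{X^{0,\f{1}{2}+\de}}\leq C2^{2\de k}\|v\|_{X^{0,\f{1}{2}-\de}}$, so \eqref{kkk3} applied with $v^{<}$ contributes $\leq C\,2^{2\de k}\|u\|_{X_T^{0,\f{1}{2}+\de}}\|v\|_{X_T^{0,\f{1}{2}-\de}}$, while $\|v^{>}\|_{L^2_{t,x}}\leq C2^{-(\f{1}{2}-\de)k}\|v\|_{X^{0,\f{1}{2}-\de}}$ together with the crude bound contributes $\leq C\,2^{k/2}2^{-(\f{1}{2}-\de)k}\|u\|\|v\|=C\,2^{\de k}\|u\|\|v\|\leq C\,2^{2\de k}\|u\|\|v\|$; the $T$--dependence of the constant is inherited from \eqref{kkk3}.

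I do not expect a serious obstacle here: the whole argument is bookkeeping, and the one place where care is needed — and where one can lose the stated exponents — is the choice of the threshold $\La$, which must be large enough that $2^{k/2}\La^{-(\f{1}{2}-\de)}$ is a negative power of $2^k$ and small enough that $\La^{2\de}$ does not swamp the $2^k$--gain coming from Lemma~\ref{estimates} (hence $\La=2^{2k}$ for \eqref{kkkk1}--\eqref{kkkk2}, which have the gain $2^{-(\f{1}{2}-\de)k}$, and $\La=2^k$ for \eqref{kkkk3}, which has none). As an alternative that avoids calibrating $\La$, all three bounds follow simultaneously — with slightly better exponents — by bilinear complex interpolation in the $v$--variable between the estimates of Lemma~\ref{estimates} (at $b=\f{1}{2}+\de$) and the crude bound above (at $b=0$, i.e.\ $X^{0,0}=L^2_{t,x}$), with $\theta=\f{4\de}{1+2\de}$, for which $[X^{0,\f{1}{2}+\de},X^{0,0}]_\theta=X^{0,\f{1}{2}-\de}$ and $(1-\theta)(-\f{1}{2}+\de)+\f{\theta}{2}\leq-\f{1}{2}+5\de$.
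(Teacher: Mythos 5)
Your argument is correct and all the exponents check out. The paper proves the corollary in a formally different but substantively equivalent way: for fixed $u$ it regards $v\mapsto (u_k\overline{v_k})_k$ as a linear operator in $v$ and complex-interpolates between \eqref{kkk1} (i.e.\ $X^{0,\f{1}{2}+\de}\to L^2$ with bound $2^{-(\f{1}{2}-\de)k}$) and the same crude Bernstein bound you use ($X^{0,0}\to L^2$ with bound $2^{k/2}$), via $[X^{0,\f{1}{2}+\de},X^{0,0}]_{4\de}=X^{0,\f{1}{2}-\de-4\de^2}\supseteq X^{0,\f{1}{2}-\de}$ --- which is exactly the alternative you sketch in your closing paragraph, with a slightly different choice of $\theta$. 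Your modulation-threshold decomposition $v=v^{<}+v^{>}$ at $\La=2^{2k}$ (resp.\ $\La=2^{k}$ for \eqref{kkkk3}) is the elementary real-variable substitute for that interpolation: it avoids invoking interpolation theory on the $X^{s,b}$ scale and makes the trade-off between the loss $\La^{2\de}$ on the low-modulation piece and the gain $\La^{-(\f{1}{2}-\de)}$ on the high-modulation piece completely explicit, at the (small) cost of calibrating $\La$ separately for the two groups of estimates. Two minor points worth recording: the sharp modulation cutoffs are Fourier multipliers in $(\tau,\xi)$ and hence commute with $P_k$, so splitting $\overline{v_k}$ as $\overline{P_kv^{<}}+\overline{P_kv^{>}}$ is legitimate; and since $v^{<},v^{>}$ are no longer Schwartz, one should note that the estimates of Lemma~\ref{estimates} extend by density to all functions with finite $X^{0,\f{1}{2}+\de}$ norm (the paper's interpolation argument implicitly requires the same extension).
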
 
 	\begin{proof}
 	We will show only \eqref{kkkk1}, the others follow similar route. Indeed, we use a combination of H\"olders with the Sobolev embedding $\|u_k\|_{L^\infty_x}\leq C 2^{k/2} \|u_k\|_{L^2_x}$ to obtain the following estimate 
 	\begin{eqnarray*}
  \|(u_k \overline{v_k})_k\|_{L^2_{t x}} &\leq &  \|u_k\|_{L^\infty_{t x}} \|v_k\|_{L^2_{t x}} \leq 
 	C 2^{k/2} \|u_k\|_{L^\infty_x L^2_x} \|v\|_{X^{0,0}}\leq \\ 
 	&\leq & C_\de 2^{k/2} \|u\|_{X^{0,\f{1}{2}+\de}}  \|v\|_{X^{0,0}}.
 	\end{eqnarray*}
 	For a fixed function $u$, we are set to use complex interpolation between this and \eqref{kkk1}. Noting that $[X^{0,\f{1}{2}+\de}, X^{0,0}]_{4\de}=X^{0,\f{1}{2}-\de-4\de^2}$, we conclude 
 	$$
 	\|(u_k \overline{v_k})_k\|_{L^2_{t x}}\lesssim  2^{-k(\f{1}{2}-5\de+4\de^2)} 
 	\|u\|_{X^{0,\f{1}{2}+\de}} \|v\|_{X^{0,\f{1}{2}-\de-4\de^2}}\lesssim 
 	 2^{-k(\f{1}{2}-5\de)}\|u\|_{X^{0,\f{1}{2}+\de}} \|v\|_{X^{0,\f{1}{2}-\de}}
 	$$
 	For the proof of \eqref{kkkk3}, we interpolate between \eqref{kkk3} and the estimate 
 	$$
 	\|u_k\bar{v}\|_{L^2_{t x}}\leq C \|u_k\|_{L^\infty_{t x}} \|v\|_{L^2_{t x}}\leq C_\de 2^{k/2} 
 	\|u_k\|_{X^{0, \f{1}{2}+\de}} \|v\|_{X^{0,0}}
 	$$ 
 	\end{proof}
 	
 	The next lemma is new and addresses one situation in the (generally unfavorable) case 
\eqref{gain3}, where one still can get a gain of almost half derivative. 
\begin{lemma}
We denote $\widehat{u^+}(\xi)=\widehat{u}(\xi)\chi_{(0,\infty)}(\xi), 
\widehat{u^-}(\xi)=\widehat{u}(\xi)\chi_{(-\infty,0)}(\xi)$. For all $k>0$,   
\begin{equation}\label{plusminus}
||(u^+_{\sim k} v^-_{\sim k})_{\sim k}||_{L^2} \leq C \, 2^{-(\frac{1}{2}-\delta)k}||u||_{X^{0,\frac{1}{2}+\delta}}||v||_{X^{0,\frac{1}{2}+\delta}},
\end{equation}
for some absolute constant $C$. 
\end{lemma}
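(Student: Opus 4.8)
The plan is to reduce the estimate to the classical bilinear $L^2$ bound for products of free Schr\"odinger evolutions with \emph{transversal} Fourier supports, transversality being supplied here precisely by the sign restriction $\xi_1>0>\xi_2$. It should be said at the outset that the $\|\cdot\|_{\mathcal{M}}$-machinery of Section~\ref{sec:s2} used for \eqref{gain1}--\eqref{kkk3} cannot deliver this gain: since $u^+_{\sim k}$, $v^-_{\sim k}$ and the output all sit at frequency $\sim 2^k$, we are forced into the regime $N_1\sim N_2\sim N_3\sim 2^k$, and in the fully resonant sub-case $L_3=L_{\max}\sim 2^{2k}$ (which no constraint excludes, the resonance identity only forcing $L_{\max}\gtrsim 2|\xi_1\xi_2|\sim 2^{2k}$) the applicable estimate \eqref{ppm4} carries no negative power of $2^k$, while \eqref{ppm1} and \eqref{ppm2} are worse. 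Hence the curvature of the parabola, together with the sign condition, must be used directly.

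First I would record the free bilinear estimate: if $\widehat f$ is supported in $[2^{k-1},2^{k+1}]$ and $\widehat g$ in $[-2^{k+1},-2^{k-1}]$, then
\[
\|(e^{-it\p_x^2}f)(e^{-it\p_x^2}g)\|_{L^2_{t,x}}\leq C\,2^{-k/2}\|f\|_{L^2}\|g\|_{L^2}.
\]
To prove it I would compute the space-time Fourier transform of the product, which at $(\tau,\xi)$ is a constant multiple of $\int\delta(\tau-\xi_1^2-(\xi-\xi_1)^2)\,\widehat f(\xi_1)\,\widehat g(\xi-\xi_1)\,d\xi_1$. The phase $\xi_1\mapsto\xi_1^2+(\xi-\xi_1)^2$ has derivative $2(2\xi_1-\xi)=2(\xi_1-\xi_2)$, whose absolute value equals $\xi_1+|\xi_2|\sim 2^k$ on the relevant supports; in particular the phase is monotone there, so the change of variables $\tau=\xi_1^2+(\xi-\xi_1)^2$ is unambiguous and, together with Plancherel and Cauchy--Schwarz in the standard way, bounds the square of the $L^2_{t,x}$ norm by $C\iint|\xi_1-\xi_2|^{-1}|\widehat f(\xi_1)|^2|\widehat g(\xi_2)|^2\,d\xi_1\,d\xi_2\lesssim 2^{-k}\|f\|_{L^2}^2\|g\|_{L^2}^2$.

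Next I would transfer the bound to $X^{0,\frac{1}{2}+\delta}$ data by the usual superposition device: writing $u(t,x)=\int_{\rone}e^{it\theta}(e^{-it\p_x^2}g_\theta)(x)\,d\theta$ with $\widehat{g_\theta}(\xi)$ a constant multiple of $\widetilde u(\xi^2+\theta,\xi)$, Cauchy--Schwarz in $\theta$ against $\langle\theta\rangle^{-\frac{1}{2}-\delta}\in L^2$ gives $\int_{\rone}\|g_\theta\|_{L^2}\,d\theta\leq C_\delta\|u\|_{X^{0,\frac{1}{2}+\delta}}$, and likewise for $v$. The sharp cut-offs $(\cdot)^{\pm}$ and the projection $P_{\sim k}$ are Fourier multipliers, so they commute with $e^{-it\p_x^2}$ and may be moved inside the $g_\theta$'s without affecting these bounds; expanding $u^+_{\sim k}v^-_{\sim k}$ as a double $\theta$-integral, applying Minkowski's inequality (the unimodular factor $e^{it(\theta_1+\theta_2)}$ drops out of $\|\cdot\|_{L^2_{t,x}}$) and invoking the free bilinear estimate term by term — its support hypotheses holding because $\widehat{u^+_{\sim k}}$ lives in $[2^{k-1},2^{k+1}]$ and $\widehat{v^-_{\sim k}}$ in $[-2^{k+1},-2^{k-1}]$ — yields $\|u^+_{\sim k}v^-_{\sim k}\|_{L^2_{t,x}}\leq C\,2^{-k/2}\|u\|_{X^{0,\frac{1}{2}+\delta}}\|v\|_{X^{0,\frac{1}{2}+\delta}}$. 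Finally, $(\cdot)_{\sim k}$ is bounded on $L^2_{t,x}$, so it only improves the estimate, and $2^{-k/2}\leq 2^{-(\frac{1}{2}-\delta)k}$ for $k>0$; this completes the proof (in fact with the stronger exponent $\frac{1}{2}$).

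I expect the main obstacle to be conceptual rather than computational: one must recognize that none of the bilinear multiplier bounds of Proposition~\ref{ppm} detects the gain in the worst configuration, so the transversality of the two arcs of the parabola has to be exploited directly — the sign condition furnishing exactly the lower bound $|\xi_1-\xi_2|\sim 2^k$ that the same-frequency, same-sign situation of \eqref{gain3} lacks. Everything after that is routine.
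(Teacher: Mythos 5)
Your proof is correct, and it takes a genuinely different route from the paper's. You first establish the transversal bilinear refinement of Strichartz for free evolutions whose Fourier supports lie on opposite sides of the origin at height $\sim 2^k$ (the Jacobian $2|\xi_1-\xi_2|\sim 2^k$ of the resonance function $\xi_1\mapsto \xi_1^2+(\xi-\xi_1)^2$ being bounded below precisely because of the sign condition), and then transfer the bound to $X^{0,\frac{1}{2}+\delta}$ data by superposition over the modulation parameter $\theta$; this yields the estimate with the clean factor $2^{-k/2}$. The paper instead stays entirely inside Tao's $\|\cdot\|_{\mathcal{M}}$ framework: it proves the multiplier-level bound \eqref{chi3}, namely $\|\chi_A\chi_B\chi_C\|_{\mathcal{M}}\lesssim L_1^{1/2}L_2^{1/2}2^{-k/2}$, via box localization (Corollary 3.13 of \cite{Tao1}) and a direct measure estimate of the resonant set --- for fixed $\xi_1$ the variable $\tau_1$ is confined to an interval of length $O(\min(L_1,L_2))$, and the constraint $(\xi_1-\xi/2)^2=C_{\tau,\xi}+O(\max(L_1,L_2))$ with $C_{\tau,\xi}\sim 2^{2k}$ (again a consequence of the opposite signs) confines $\xi_1$ to an interval of length $O(\max(L_1,L_2)2^{-k})$ --- after which one sums over the dyadic modulations. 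Both arguments hinge on the same geometric fact, the transversality $|\xi_1-\xi_2|\sim 2^k$ of the two arcs of the parabola; yours is more self-contained (no appeal to Tao's Corollaries 3.10 and 3.13) and gives the slightly stronger exponent $-k/2$ with no $\delta$-loss, whereas the paper's version produces an explicit bound in $L_1,L_2$ at the multiplier level, which keeps the lemma uniform with the rest of its $\mathcal{M}$-norm toolkit. Your opening diagnosis --- that the generic bounds of Propositions \ref{prop:5} and \ref{ppm} cannot detect the gain in the fully resonant configuration $N_1\sim N_2\sim N_3$, $L_{\max}\sim 2^{2k}$, so the sign information must be used directly --- is exactly the reason the paper isolates this estimate as a separate lemma.
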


\begin{proof}
We present  the argument for $||(u^+_k v^-_k)^-_k||_{L^2}$.  The proof for the other case 
$||(u^+_k v^-_k)^+_k||_{L^2}$ is analogous.\\

First we define the following sets.
\begin{align*}
A &:= \{ (\tau,\xi)\vert \xi>0, \xi \sim 2^{k}, |\tau-\xi^2| \sim L_1 \}\\
B &:= \{ (\tau,\xi)\vert \xi <0, \xi \sim 2^{k}, |\tau - \xi^2| \sim L_2 \}\\
C &:= \{ (\tau, \xi) \vert \xi <0, \xi \sim 2^k \}
\end{align*}
Then we need to show 
\begin{equation}\label{chi3}
||\chi_A (\tau_1, \xi_1) \chi_B(\tau_2, \xi_2) \chi_C (\tau_1 + \tau_2, \xi_1+\xi_2) ||_{\mathcal{M}} \lesssim \frac{L_1^{\frac{1}{2}} L_2^{\frac{1}{2}}}{2^{\frac{k}{2}}}.
\end{equation}
Note that if $\max (L_1, L_2) \gtrsim 2^{2k}$, then \eqref{ppp2} gives us the desired statement.  Otherwise, we have $\max(L_1, L_2) \ll 2^{2k}$.\\

For some $\varepsilon>0$ small, we partition $A$ (similarly $B$) into $m =O(1/\varepsilon)$ subsets $A_1, \cdots, A_m$ so that the diameter of $A_j$ in $\xi$~variable (similarly $B_j$) is less than $\varepsilon 2^k$ for all $1\leq j \leq m$.  Then, removing the terms when $\chi_{A_i} \chi_{B_j} \chi_{C}=0$, we can omit $\chi_C$ from the expression \eqref{chi3}.\\

Applying the comparision principle and Corollary 3.10 of \cite{Tao1}, the left side of \eqref{chi3} is bounded by
\begin{equation}\label{bound3}
\sum_{i,j=1}^m \left|\{ (\xi_1,\tau_1) \in A_i : \, (\tau,\xi) - (\tau_1,\xi_1) \in B_j \}\right|^{\frac{1}{2}}
\end{equation}
where $\tau, \xi$ are fixed.  Note that $(\tau,\xi)\in C$ or at most distance~$O(\varepsilon 2^k)$ from $C$, so $\xi\sim -2^k$.  Writing out the condition of the set gives $\tau_1 = \xi_1^2 +O(L_1)$ and $\tau -\tau_1 = (\xi-\xi_1)^2 +O(L_2)$.  So, for a fixed $\xi_1$, the $\tau_1$ must be in an interval of length
$O(\min (L_1,L_2))$. Then \eqref{bound3} is bounded by
\[
\sum_{i,j=1}^m \min(L_1,L_2)^{\frac{1}{2}} \left|\{ \xi_1 >0, \xi_1 \sim 2^k: \xi_1^2 + (\xi-\xi_1)^2 = \tau + O(\max(L_1,L_2))\} \right|^{\frac{1}{2}}
\]
We can write $\displaystyle \xi_1^2 + (\xi-\xi_1)^2 = \frac{\xi^2 +(2\xi_1-\xi)^2}{2}$.  So the condition given above can be written as
\[
(\xi_1-\frac{\xi}{2})^2 = C_{\tau,\xi} +O(\max (L_1, L_2))
\]

where $C_{\tau,\xi} := \frac{2\tau - \xi^2}{4}$.  Since $\xi_1$ and $\xi$ have the opposite sign, the left hand side of the above is $\sim 2^{2k}$.  On the other hand,  $\max (L_1, L_2) \ll 2^{2k}$, so $C_{\tau,\xi} \sim 2^{2k}$.  Then we have
\begin{align*}
|\xi_1 - (\frac{\xi}{2} + \sqrt{C_{\tau,\xi}})| &= \sqrt{C_{\tau,\xi} + O(\max (L_1, L_2))}- \sqrt{C_{\tau,\xi}}\\
 	&= \frac{O(\max (L_1, L_2))}{\sqrt{C_{\tau,\xi} + O(\max (L_1, L_2))}+ \sqrt{C_{\tau,\xi}}}\\
 	&\lesssim \frac{O(\max (L_1, L_2))}{2^k}.
\end{align*}
So $\xi_1$ must be contained in an interval of length $\displaystyle \frac{O(\max (L_1, L_2))}{2^k}.$  Using this interval in \eqref{bound3} gives the desired estimate \eqref{chi3}. 
\end{proof}

\section{Proof of Theorem \ref{theo:1}}
\label{sec:s3}
We first perform a change of variables, which transforms the problem \eqref{1} into a problem with data in $L^2$. Namely, let 
$v: u=\naba v$. A quick calculation then shows that 
\eqref{1} becomes 
\begin{equation}
\label{2}
\left| 
\begin{array}{l}
v_t+ i \p_x^2 v= \nab^{\be-\al} [\naba v   \naba v]: \qq (t,x)\in \rone \times \rone\\
v(0,x)=\nabam g=:f\in L^2(\rone).
\end{array}\right.
\end{equation}
Thus, we need to study the well-posedness of \eqref{2} in the $L^2$ setting.  
\subsection{Setting up the normal forms} 
Introduce  $G(u,v):= \nab^{\be-\al} [\naba u    \naba v]$, so that the nonlinearity in \eqref{2} is of the form $G(v, v)$, note $G(u,v)=G(v,u)$.  Observe that  the bilinear  form 
 $G$  may be written as follows 
$$
G(u,v)(x)= \f{1}{4\pi^2} \int \f{\langle \xi\rangle^\al \langle\eta\rangle^\al }{\langle\xi+\eta\rangle^{\al-\be}}
 \widehat{u}(\xi) \widehat{v}(\eta) e^{i(\xi+\eta)x} d\xi 
d\eta.
$$
We now decompose the form $G(v,v)$ as follows 
\begin{eqnarray*}
G(v,v) &=&  G(v_{\leq 0},v)+G(v_{> 0}, v)= 
G(v_{\leq 0},v)+G(v_{>0}, v_{\leq 0})+ G(v_{>0}, v_{>0})= \\
&=&  G(v_{\leq 0},(Id+P_{>0})v)+ G(v_{>0}, v_{>0}).
\end{eqnarray*}
Next, we perform a change of variables $v\to z$,  $v=e^{-i t \p_x^2} f+z$. Clearly, $z(0)=0$ and  
\begin{eqnarray*}
z_t+i \p_x^2 z &=& G([e^{-i t \p_x^2} f+z]_{\leq 0}, (Id+P_{> 0})[e^{-i t \p_x^2} f+z])+ \\
& &+ 
G([e^{-i t \p_x^2} f+z]_{>0}, [e^{-i t \p_x^2} f+z]_{>0}) 
\end{eqnarray*}
Clearly, a lot of terms are generated by this transformation. We comment now on the form 
of various terms (especially the least favorable ones!), since this will influence 
our normal form analysis. 

Heuristically, if we expect   the $z$ term to be smoother, then 
the least smooth term is expected to be $G(e^{-i t \p_x^2} f_{>0}, 
e^{-i t \p_x^2} f_{>0})$. Indeed, there are $\al$ derivatives acting on each of the two 
entries (which  free solutions and hence, in general, no better than $L^2_x$ smooth) and $\be-\al$ derivatives acting on the product itself\footnote{In fact this $\be-\al$ derivatives on the product may not be of much help in ``high-high to low interaction scenario}.  
Thus, if we manage to build a 
smoother function $h$, which solves 
\begin{equation}
\label{30}
(\p_t+i\p_x^2) h= G(e^{-i t \p_x^2} f_{>0}, 
e^{-i t \p_x^2} f_{>0}),
\end{equation}
one would be compelled to change variables again, $z\to w$, where 
 $z=h+w$. 
Define   a bilinear operator $T$ 
$$
T(u,v)(x)= \f{1}{8\pi^2 i} \int 
\f{\langle\xi\rangle^\al\langle\eta\rangle^\al}{\langle\xi+\eta\rangle^{\al-\be}}
\f{1}{ \xi \eta} \widehat{u_{>0}}(\xi) \widehat{v_{>0}}(\eta) e^{i(\xi+\eta)x} d\xi 
d\eta.
$$
It is easy to check that for a pair of functions $u(t,x), v(t,x) \in \mathcal{S}(\rone\times \rone)$, 
\begin{equation}
\label{8} 
(\p_t+i \p_x^2) T(u,v)=T((\p_t+i \p_x^2) u, v)+
T(  u, (\p_t+i \p_x^2)v)+G(u_{>0},v_{>0}).
\end{equation} 
This last identity tells us that 
$$
(\p_t+i \p_x^2) T(e^{-i t \p_x^2} f, e^{-i t \p_x^2} f)= 
G(e^{-i t \p_x^2} f_{>0}, e^{-i t \p_x^2} f_{>0})
$$
which provides an explicit solution\footnote{Note that while 
the solution $h(t)$ of the Schr\"odinger equation  \eqref{30} is not unique, it is completely determined by its value $h(0)$}of \eqref{30}. Hence, set 
$$
h:= T(e^{-i t \p_x^2} f, e^{-i t \p_x^2} f). 
$$
We now change variables $z=h+w=T(e^{-i t \p_x^2} f,e^{-i t \p_x^2} f)+w$, 
whence we get the following equation for $w$ 
\begin{equation}
\label{a:2}
\begin{array}{ll}
w_t+i \p_x^2 w &= G([e^{-i t \p_x^2} f+h+w]_{\leq 0}, (Id+P_{> 0})[e^{-i t \p_x^2} f+h+w])+\\
 &+ 2 G((h+w)_{>0},e^{-i t \p_x^2} f_{>0})+ G((h+w)_{>0}, (h+w)_{>0})
 \end{array}
\end{equation}
 Note also that since $z(0)=0$, it follows that the Schr\"odinger equation for $w$ is supplemented by the following initial condition:  $w(0)=-h(0)=-T(f,f)$.  We have now prepared ourselves to close the argument in the $w$ variable. More precisely, 
 the proof of Theorem \ref{theo:1} reduces to establishing the 
 local well-posedness of the Schr\"odinger equation \eqref{a:2} in an appropriate function space. 
 
 Fix $1>>\de>0$.  Consider the spaces 
 \begin{eqnarray*}
 & & \cx=X^{\al-\f{1}{2},\f{1}{2}+\de}, \\ 
 & & \ch= L^\infty_t H^{\f{1}{2}}_x \cap X^{1-\de, \de}, 
 \end{eqnarray*} 
Our strategy will be to show that the fixed point argument for $w$ closes in the space $\cx$, given that $f\in L^2$, $h\in \ch$ and where we will occasionally need to use the particular form $h=T(e^{-i t \p_x^2} f, e^{-i t \p_x^2} f)$. 

\subsection{Estimates on the normal form}
\label{sec:s32}
In this section, we show the required smoothness of the normal form $h$, 
namely $h\in \ch$.  This will be done in two steps - in  Lemma \ref{tmap1} and Lemma \ref{tmap3}. 
 \begin{lemma} \label{tmap1}
 $T: L^2(\mathbb{R}) \times L^2 (\mathbb{R}) \to H^{\frac{1}{2}}(\mathbb{R})$ continuously.
 \end{lemma}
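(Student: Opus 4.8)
The plan is to work entirely on the Fourier side and to exploit that $u_{>0},v_{>0}$ are supported on frequencies $|\xi|\geq 1$, so the symbol
\[
m(\xi,\eta)=\f{\langle\xi\rangle^\al\langle\eta\rangle^\al}{\langle\xi+\eta\rangle^{\al-\be}}\,\f{1}{\xi\eta}
\]
of $T$ has no singularity coming from $1/(\xi\eta)$, all the brackets may be replaced by the corresponding absolute values, and on the relevant region $\xi,\eta\geq 1$ (hence $\zeta:=\xi+\eta\geq 2$) one has $|m(\xi,\eta)|\lesssim \xi^{\al-1}\eta^{\al-1}\zeta^{\be-\al}$. The structural point that makes everything work is that both input frequencies are \emph{positive}, so there is no cancellation in $\zeta=\xi+\eta$ and $\zeta\sim\max(\xi,\eta)$; in particular $T(u,v)$ is supported on frequencies $|\zeta|\gtrsim 1$.

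First I Littlewood--Paley decompose $u_{>0}=\sum_{j\geq 1}u_j$, $v_{>0}=\sum_{k\geq 1}v_k$, write $T(u,v)=\sum_{j,k}T(u_j,v_k)$, and split into the high-high part $|j-k|\leq 5$ and the two (symmetric) high-low parts $j\geq k+5$ and $k\geq j+5$. By the previous remark the output of $T(u_j,v_k)$ sits in the annulus $|\zeta|\sim 2^{\max(j,k)}$, so $P_\ell T(u_j,v_k)=0$ unless $\ell=\max(j,k)+O(1)$. For a single dyadic interaction, replacing $\widehat{u_j},\widehat{v_k}$ by their absolute values and bounding the symbol by $\|m\|_{L^\infty(\mathrm{region})}$ gives $\|P_\ell T(u_j,v_k)\|_{L^2}\lesssim \|m\|_{L^\infty}\|u_j\|_{L^2}\|v_k\|_{L^\infty_x}$, after which the embedding $\|v_k\|_{L^\infty_x}\lesssim 2^{k/2}\|v_k\|_{L^2}$ (as used already in the proof of Corollary~\ref{cor:a1}) is applied to the \emph{lower}-frequency factor.

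In the high-low regime $j\geq k+5$ we have $\ell\sim j$, $\xi\sim\zeta\sim 2^j$, $\eta\sim 2^k$, so $\|m\|_{L^\infty}\lesssim 2^{j(\be-1)}2^{k(\al-1)}$ and
\[
\|P_\ell T(u_j,v_k)\|_{L^2}\lesssim 2^{j(\be-1)}2^{k(\al-\f12)}\|u_j\|_{L^2}\|v_k\|_{L^2}.
\]
Summing over $k\leq\ell$ via Cauchy--Schwarz — this is where $\al>\f12$ enters, to sum $\sum_{k\leq\ell}2^{2k(\al-\f12)}\sim 2^{2\ell(\al-\f12)}$ — yields $\|P_\ell T(u,v)\|_{L^2}\lesssim 2^{\ell(\al+\be-\f32)}\|u_\ell\|_{L^2}\|v\|_{L^2}$, hence $\|P_\ell T(u,v)\|_{H^{1/2}}\lesssim 2^{\ell(\al+\be-1)}\|u_\ell\|_{L^2}\|v\|_{L^2}$. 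The high-high regime is easier: there $\ell\sim j\sim k$, $\|m\|_{L^\infty}\sim 2^{\ell(\al+\be-2)}$, and the same Bernstein step gives $\|P_\ell T(u,v)\|_{H^{1/2}}\lesssim 2^{\ell(\al+\be-1)}\|u_\ell\|_{L^2}\|v_\ell\|_{L^2}$. Since $\al<1-\be$ the exponent $\al+\be-1$ is negative, so $2^{2\ell(\al+\be-1)}\leq 1$ for $\ell\geq 1$; squaring and summing over $\ell$ then gives $\|T(u,v)\|_{H^{1/2}}\lesssim\|u\|_{L^2}\|v\|_{L^2}$ from each regime, which is the asserted boundedness (and hence continuity).

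The only genuinely delicate point is the high-low interaction: a crude Cauchy--Schwarz (or Young) estimate on the $(\xi,\eta)$-integral fails, because it effectively replaces the $\dot H^{\al-1}$-norm of the low-frequency factor by its $L^2$-norm and then cannot pay for the number of low dyadic scales. The Littlewood--Paley bookkeeping together with $\al>\f12$ is exactly what makes the low-frequency sum converge, while $\al+\be<1$ supplies the half derivative needed to land in $H^{1/2}$; everything else is routine.
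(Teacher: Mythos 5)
Your high--low analysis is correct and essentially coincides with the paper's treatment of the sum $I_1$ (Bernstein on the low-frequency factor, then summation using $\al>\f12$ and $\al+\be<1$). The genuine gap is in the high--high regime, and it stems from a misreading of the notation: in this paper $u_{>0}$ denotes the Littlewood--Paley projection $P_{>0}u$ onto frequencies $|\xi|\gtrsim 1$, \emph{not} the projection onto positive frequencies (the latter is written $u^+$, with a superscript, as in the lemma containing \eqref{plusminus}). So $\xi$ and $\eta$ in the symbol of $T$ satisfy $|\xi|,|\eta|\gtrsim 1$ but may have opposite signs, and when $j\sim k$ the output frequency $\zeta=\xi+\eta$ ranges over all of $|\zeta|\lesssim 2^{j}$, including $\zeta$ near $0$. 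Your structural claim ``$P_\ell T(u_j,v_k)=0$ unless $\ell=\max(j,k)+O(1)$'' is therefore false precisely in the high--high case, and your estimate there, which assumes $\ell\sim j\sim k$, omits the entire high--high-to-low output. This is not a cosmetic omission: the natural repair inside your scheme (Bernstein on one \emph{input} factor, costing $2^{j/2}$) does not close, since for output frequency $2^m\ll 2^j$ it produces
\begin{equation*}
2^{m/2}\cdot 2^{m(\be-\al)}\,2^{j(2\al-2)}\cdot 2^{j/2}\,\|u_j\|_{L^2}\|v_j\|_{L^2},
\end{equation*}
and summing over $m\leq j$ leaves $2^{j(2\al-\f32)}$ when $\al\geq \f12+\be$, which is not summable for $\al$ close to $1-\be$ (e.g.\ $\be=0$, $\al=0.9$).

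The correct move --- and what the paper does for its sum $I_2$ --- is to apply Bernstein at the \emph{output} frequency, $\|P_m(u_k v_k)\|_{L^2}\lesssim 2^{m/2}\|u_k v_k\|_{L^1}\lesssim 2^{m/2}\|u_k\|_{L^2}\|v_k\|_{L^2}$, so that the gain is $2^{m/2}$ rather than $2^{k/2}$. Collecting the $H^{1/2}$ weight $2^{m/2}$, the symbol factor $2^{m(\be-\al)}2^{k(2\al-2)}$, and this Bernstein gain, the total power of $2^{m}$ is $1-\al+\be>0$, so the sum over output scales $m\leq k+2$ contributes $2^{(1-\al+\be)k}$, and the net exponent in $k$ is $\al+\be-1<0$, which sums. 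With that correction your argument becomes complete and is then essentially the paper's proof, merely reorganized by square-summing over output annuli instead of using the triangle inequality.
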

 \begin{proof} 
 Let $u,v\in \mathcal{S}(\mathbb{R})$.  Then 
 \begin{equation}\label{est}
 \|T(u,v)\|_{H^{\frac{1}{2}}} \leq \sum_{k\geq l+3}  \|T(u_k,v_l)\|_{H^{\frac{1}{2}}} + 
 \sum_{|k-l|< 3} \|T(u_k,v_l)\|_{H^{\frac{1}{2}}} = I_1 + I_2
 \end{equation}
 where $k,l >0$.  Regarding the first sum in (\ref{est}), we apply H\"older's and then 
 the Sobolev embedding $\|u_l\|_{L^\infty}\leq C 2^{l/2} \|u_l\|_{L^2}$. We get 
\begin{align*}
I_1 &\leq C \sum_{l>0}\sum_{k\geq l} 
2^{\frac{k}{2}}\frac{2^{\alpha k +\alpha l}}{ 2^{(\alpha-\beta) k + k+ l}} \|u_k v_l\|_{L^2} \leq C \sum_{l>0}\sum_{k\geq l} 2^{(\beta-\frac{1}{2}) k +(\alpha -1/2) l}\|u_k\|_{L^2}  
	\|v_l\|_{L^2}\\
	&\leq C \|u\|_{L^2} \|v\|_{L^2} \sum_{l>0} 2^{(\alpha+\beta -1)l} \leq C \|u\|_{L^2} \|v\|_{L^2}
\end{align*}

Regarding the second sum in (\ref{est}), 
\begin{align*}
I_2 &\leq C \sum_{k>0} \sum_{m\leq k+2} 2^{\frac{1}{2}m} \frac{ 2^{2\alpha k}}{2^{(\alpha-\beta) m + 2k }} ||P_m(u_k v_k)||_{L^2} \leq C \sum_{k>0} \sum_{m\leq k+2} 2^{(\frac{1}{2}-\alpha+\beta)m +(2\alpha-2) k} 2^{\frac{m}{2}}||u_k v_k||_{L^1}\\
   &\leq C \|u\|_{L^2} \|v\|_{L^2}\sum_{k>0} 2^{(\alpha+\beta-1) k}  \leq C  \|u\|_{L^2} \|v\|_{L^2}. 
\end{align*}
\end{proof}  
The next lemma provides a different type of estimate, namely that if we measure \\ 
$T(e^{-i t \p_x^2} f, e^{-i t \p_x^2} f)$ 
in averages $L^2_{t x}$ sense, we actually get a full spatial derivative gain. More precisely, 
\begin{lemma}\label{tmap3}
Let $u,v \in X^{0,\frac{1}{2}+\delta}_{\tau=\xi^2}$, then for $0\leq \delta < 1-\alpha-\beta$, 

\[
\|T(u, v)\|_{X^{ 1-\delta, \delta}} \leq C_{\delta} \|u\|_{X^{0,\frac{1}{2}+\delta}} \|v\|_{X^{0,\frac{1}{2}+\delta}}.
\]
\end{lemma}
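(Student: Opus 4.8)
The plan is to dualize the $X^{1-\de,\de}$ norm, reduce the claim to a sum of bilinear $L^2$ multiplier estimates of the type supplied by Proposition~\ref{ppm}, and then close the resulting dyadic sums using the arithmetic constraints $\tfrac12<\al<1$, $0\le\be<\tfrac12$ and $0\le\de<1-\al-\be$.

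By \eqref{dual2} (or the computation leading to \eqref{uv}) and the explicit form of the symbol of $T$, it suffices, on each dyadic block $|\xi_j|\sim N_j$, $|\tau_1-\xi_1^2|\sim L_1$, $|\tau_2-\xi_2^2|\sim L_2$, $|\tau_3+\xi_3^2|\sim L_3$, to bound $\|m\|_{\mathcal M}$, where after peeling off the $L^2$-normalized factors of $u$, $v$ and the test function one has
\[
|m|\lesssim \frac{\langle N_1\rangle^\al\langle N_2\rangle^\al\,\langle N_3\rangle^{1-\de}}{\langle N_3\rangle^{\al-\be}\,N_1N_2}\;\frac{L_3^{\de}}{L_1^{\frac12+\de}\,L_2^{\frac12+\de}}\;\chi^{(+,+,-)},
\]
and then sum over all dyadic $N_j,L_j$ against $\|u\|_{X^{0,1/2+\de}}\|v\|_{X^{0,1/2+\de}}$. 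The two reasons for projecting onto $u_{>0}$, $v_{>0}$ in the definition of $T$ now come into play: first, $N_1,N_2\gtrsim1$, so $1/(N_1N_2)$ is harmless and in fact $\langle N_j\rangle^\al/N_j\sim N_j^{\al-1}$ with $\al-1<0$; second, and crucially, since $\xi_1,\xi_2>0$ there is no cancellation in $\xi_1+\xi_2$, so $N_3\sim N_1+N_2\sim N_{\max}\sim N_{\med}$, i.e.\ there is \emph{no high--high-to-low output}. Hence $|m|\lesssim N_1^{\al-1}N_2^{\al-1}N_{\max}^{1-\de-\al+\be}\,L_3^{\de}L_1^{-1/2-\de}L_2^{-1/2-\de}\,\chi^{(+,+,-)}$, and we are precisely in the $(+,+,-)$ geometry of Proposition~\ref{ppm}, with $H\sim N_1N_2$ and $L_{\max}\sim\max(L_{\med},H)$.

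The heart of the proof is the case analysis. Assume WLOG $N_1\ge N_2$, so $N_{\max}\sim N_3\sim N_1$, $N_{\min}=N_2$, and the $N$-part of the coefficient is $N_1^{\be-\de}N_2^{\al-1}$. If $L_3=L_{\min}$ one uses the first alternative in \eqref{ppm2}, $\chi^{(+,+,-)}\lesssim(L_2L_3/N_1)^{1/2}$, and redistributes the leftover power of $L_3$ onto $L_1,L_2$ (legitimate since $L_3\le\min(L_1,L_2)$), obtaining a coefficient $N_1^{\be-\de-1/2}N_2^{\al-1}L_1^{-\de/2}L_2^{-\de/2}$, which is summable since $\be-\de-\tfrac12<0$ and $\al-1<0$. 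If $L_3=L_{\max}$ and $N_1\sim N_2\sim N_3$, one uses \eqref{ppm4}; here $L_3\sim H\sim N_{\max}^2$ is essentially pinned, so up to summable $L$-factors the coefficient is $N_{\max}^{\al+\be-1+\de}$, and $\al+\be-1+\de<0$ is exactly the hypothesis $\de<1-\al-\be$. The remaining situations---$L_3=L_{\max}$ with $N_1\gg N_2$ handled by \eqref{ppm3}, and $L_1$ or $L_2$ the maximal modulation handled by \eqref{ppm1}--\eqref{ppm2}---are similar; the one extra structural input is that whenever the maximal modulation exceeds $H=N_1N_2$ and sits on a ``$+$'' slot, non-emptiness of the underlying frequency set forces $L_{\max}\gtrsim N_{\max}^2$ (this is visible from the counting behind Proposition~\ref{ppm}), which supplies the negative power of $N_{\max}$ needed to absorb $N_{\max}^{1-\de-\al+\be}$. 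In every case the sum over $N_2\le N_1$ is harmless (the exponent of $N_2$ is at most $\al-\tfrac12$, and $\sum_{N_2\le N_1}N_2^{2\al-1}\sim N_1^{2\al-1}$ handles the worst case), after which the exponent of $N_1$ is strictly negative and the $L$-sums converge because $\de>0$.

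I expect the bookkeeping of the last paragraph to be the only genuine difficulty: there are many subcases, and in several of them no single one of the bounds in Proposition~\ref{ppm} suffices on its own, so one must split a power of the largest modulation between two slots and/or invoke the sharpened resonance bound $L_{\max}\gtrsim N_{\max}^2$ to extract enough decay in $N_{\max}$. The reduction to $\|\cdot\|_{\mathcal M}$, the symbol estimate, and the final dyadic summation are otherwise routine once the right bound is selected in each subcase.
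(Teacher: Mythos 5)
Your overall strategy---dualizing, reducing to the $\mathcal{M}$-norm bounds of Proposition~\ref{ppm}, and closing the dyadic sums case by case according to which modulation is largest---is exactly the paper's. However, there is a genuine error in the structural claim you label as crucial. In this paper $u_{>0}$ denotes $P_{>0}u=\sum_{k>0}P_k u$, i.e.\ the restriction to frequencies $|\xi|\gtrsim 1$, \emph{not} to positive frequencies (the positive/negative frequency projections are the separately defined $u^{\pm}$, which appear only in \eqref{plusminus} and in the proof of Lemma~\ref{le:7}). Consequently $\xi_1$ and $\xi_2$ may have opposite signs and comparable magnitudes, the output frequency $\xi_1+\xi_2$ can be small, and the high--high-to-low regime $N_3=N_{\min}\ll N_1\sim N_2$ does occur. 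Your normalization ``WLOG $N_{\max}\sim N_3\sim N_1$, $N_{\min}=N_2$,'' and every exponent count built on it, is invalid in that regime, so your case analysis simply omits it. The omission is repairable with the same tools: there the coefficient is $N_3^{1-\al+\be-\de}(N_1N_2)^{\al-1}$ with $1-\al+\be-\de>0$, so small $N_3$ actually helps, and \eqref{ppm1} (when $L_1$ or $L_2$ is maximal, together with $L_{\max}\gtrsim N_1N_2\sim N_{\max}^2$, which does hold in this balanced-input regime) or \eqref{ppm3} (when $L_3=L_{\max}$, since $N_1\sim N_2\sim N_3$ fails) closes the sum---this is what the paper does. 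But as written your proof excludes the case on false grounds rather than verifying it.

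A second, smaller point: the ``sharpened resonance bound $L_{\max}\gtrsim N_{\max}^2$'' you invoke for the $(+,+,-)$ geometry is not true in general. The resonance function here is $(\tau_1-\xi_1^2)+(\tau_2-\xi_2^2)+(\tau_3+\xi_3^2)=2\xi_1\xi_2$, so one only gets $L_{\max}\gtrsim N_1N_2=N_{\max}N_{\min}$, which is much smaller than $N_{\max}^2$ when the input frequencies are unbalanced. Fortunately it is not needed: as in the paper's first case, $L_{\max}^{-1/2}\lesssim (N_{\max}N_{\min})^{-1/2}$ already cancels the $N_{\min}^{1/2}$ coming from \eqref{ppm1} and leaves $N_{\max}^{\be-1/2}$, which sums since $\be<1/2$. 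The balanced case $N_1\sim N_2\sim N_3$ with $L_3=L_{\max}\sim N^2$, handled by \eqref{ppm4}, is treated correctly in your proposal and is indeed where the hypothesis $\de<1-\al-\be$ enters, exactly as in the paper.
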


\begin{proof}
By using the partition of unity $\chi^{(+, +, -)}$, we can localize spacial and time frequencies to their respective indices.  (Here we localize $u, v, uv$ respectively to $N_1, N_2, N_3$.)  Also denote by the symbol~$\sum$ to be the summation over $N_1, N_2, N_3, L_1, L_2, L_3 \geq 1$. 
We have 
\begin{align}
\|T(u,v)\|_{X^{ 1-\delta, \delta}} &\leq C \sum \frac{N_3^{1-\alpha+\beta-\delta}}{N_1^{1-\alpha} N_2^{1-\alpha}} \|u \, v\|_{X^{0,\delta}} \notag\\
	&\leq C \sum \frac{N_3^{1-\alpha+\beta-\delta}}{N_1^{1-\alpha} N_2^{1-\alpha}} \sup_{\|w\|_{X^{0,-\delta}_{\tau=-\xi^2}}=1} \left| \int_{\rone\times \rone} u(t,x) v(t,x) w(t,x)\, dt\, dx\right| \notag\\
	&\leq C 
	\sup_{\|w\|_{X^{0,-\delta}_{\tau=-\xi^2}}=1}\sum \frac{N_3^{1-\alpha+\beta-\delta}}{N_1^{1-\alpha} N_2^{1-\alpha}}   
	\|\chi^{(+,+,-)}\|_{\mathcal{M}}   \|u\|_{L^2_{t,x}} \|v\|_{L^2_{t,x}}\|w\|_{L^2_{t,x}} \\
	&\leq C \|u\|_{X^{0,\frac{1}{2}+\delta}} \|v\|_{X^{0,\frac{1}{2}+\delta}}  
	\sum \frac{N_3^{1-\alpha+\beta-\delta}L_3^{\delta}}{N_1^{1-\alpha} N_2^{1-\alpha}L_1^{\frac{1}{2}+\delta} L_2^{\frac{1}{2}+\delta}}
\|\chi^{(+,+,-)}\|_{\mathcal{M}}  \label{t1}
\end{align}

We refer to Proposition~\ref{ppm}.  If $\max(L_1,L_2) = L_{\max}$, we use that $\max(N_1,N_2) \gtrsim N_3$ to simplify \eqref{t1} and then apply the multiplier 
bound~\eqref{ppm1}.  We estimate the sum in \eqref{t1} by
\begin{align*}
 \sum \frac{L_3^{\delta}N_3^{\beta -\delta}}{L_1^{\frac{1}{2}+\delta} L_2^{\frac{1}{2}+\delta}}
L_{\min}^{\frac{1}{2}} N_{\min}^{\frac{1}{2}}  \leq  C  
	\sum \frac{N_{\min}^{\frac{1}{2}-\delta} N_{\max}^{\beta}}{L_{\max}^{\frac{1}{2}}}  \leq C_{\delta}
\end{align*}

If $L_3=L_{\max}$ and $N_1 \sim N_2 \sim N_3 \sim N$, then we can 
assume $L_3 \sim N^2$, so applying \eqref{ppm4} yields the estimate 
\begin{align*}
 \sum \frac{N^{1-\alpha+\beta-\delta}L_3^{\delta}}{N^{1-\alpha} N^{1-\alpha}L_1^{\frac{1}{2}+\delta} L_2^{\frac{1}{2}+\delta}}
L_{\min}^{\frac{1}{2}} L_{\med}^{\frac{1}{4}}   \leq C 
\sum \frac{1}{ N^{1-\alpha-\beta -\delta}}
 \leq C_{\delta},
 \end{align*}
 provided $0<\de<<1-\al-\be$. \\
 The remaining case is when  $L_3 = L_{\max} \sim N_1 N_2$ with the bound~\eqref{ppm3}.  We have 
$$
 \sum \frac{N_3^{1-\alpha+\beta-\delta}L_3^{\delta}}{N_1^{1-\alpha} N_2^{1-\alpha}L_1^{\frac{1}{2}+\delta} L_2^{\frac{1}{2}+\delta}}
\frac{L_{\min}^\frac{1}{2}L_{\med}^{\frac{1}{2}}}{N_{\max}^{\frac{1}{2}}}  \leq C \sum 
N_{\max}^{\beta -\frac{1}{2}+\delta}  \leq C.
$$
 	
 	\end{proof}

\subsection{Some bilinear and trilinear estimates involving typical right-hand sides}
\label{sec:s33}

Let us start with few words regarding strategy. All the terms (with an exception of one single term) in the right hand-side of \eqref{a:2}, which contain at least one 
smooth term (i.e.  in the form $u_{\leq 0}$) will be dealt with by relatively simple arguments, mainly based 
on Lemma \ref{estimates}. For all other terms, we shall need specific (bilinear and trilinear) 
 estimates, which handle different type of configurations (i.e. $h$ and $w$, $h$ and $e^{-i \p_x^2} f$) on the right-hand side of \eqref{a:2}. 
These multilinear estimates are presented below. We also attempt to indicate the relevancy of each such estimate, before the statement of each Lemma. 

The next lemma is useful, when one deals with terms in the form 
$G(w, e^{-i \p_x^2} f)$ and $G(w,w)$  on the right hand side of \eqref{a:2}. 

\begin{lemma}
\label{le:5}
Let $z$ satisfy $(\p_t+i\p_x^2) z = G(u_{>0}, v_{>0})$ with $z(0,x) \equiv 0$, and let $0<10\de <1-\al-\be$. Then
\[
\|z\|_{X_T^{\al-\f{1}{2}, \frac{1}{2}+\delta}} \leq C_{T,\delta} \|u\|_{X^{\al-\f{1}{2}, \frac{1}{2}+\delta}} \|v\|_{X^{0,\frac{1}{2}+\delta}}.
\]
\end{lemma}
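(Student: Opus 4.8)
The plan is to use the energy estimate of Proposition~\ref{xsb} to reduce the claim to a bilinear bound on the right-hand side in the dual norm, and then exploit the extra regularity that the operator $T$ produces together with the multiplier estimates of Proposition~\ref{ppm}. First I would note that, since $z$ solves $(\p_t+i\p_x^2)z = G(u_{>0},v_{>0})$ with $z(0)=0$, identity \eqref{8} gives $z = T(u,v) + \tilde z$ where $\tilde z$ absorbs the error terms $T((\p_t+i\p_x^2)u,v)$ and $T(u,(\p_t+i\p_x^2)v)$; alternatively, and more cleanly, I would directly apply Proposition~\ref{xsb} to get
\[
\|z\|_{X_T^{\al-\f12,\f12+\de}} \leq C_{T,\de}\,\|G(u_{>0},v_{>0})\|_{X^{\al-\f12,-\f12+\de}},
\]
so the whole lemma reduces to the bilinear estimate $\|G(u_{>0},v_{>0})\|_{X^{\al-\f12,-\f12+\de}} \leq C\|u\|_{X^{\al-\f12,\f12+\de}}\|v\|_{X^{0,\f12+\de}}$.

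Next I would unfold $G$ on the Fourier side: recalling $G(u,v)^\wedge$ carries the symbol $\langle\xi_1\rangle^\al\langle\xi_2\rangle^\al\langle\xi_1+\xi_2\rangle^{\be-\al}$, and passing to the $\mathcal M$-norm formulation \eqref{uv}--\eqref{mnorm}, the claim becomes a sum over dyadic pieces $N_1,N_2,N_3,L_1,L_2,L_3$ of
\[
\sum \frac{N_3^{\al-\f12}\,N_1^{\al}N_2^{\al}N_3^{\be-\al}}{N_1^{\al-\f12}\,L_3^{\f12-\de}\,L_1^{\f12+\de}L_2^{\f12+\de}}\,\|\chi^{(+,+,-)}\|_{\mathcal M}\ \lesssim\ 1,
\]
which after cancellation is $\sum N_3^{\be-\f12}\,N_1^{\f12}\,(\text{powers of }L)^{-1}\,\|\chi^{(+,+,-)}\|_{\mathcal M}$. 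I would then run exactly the same case analysis as in the proof of Lemma~\ref{tmap3}, splitting according to which of $L_1,L_2,L_3$ is $L_{\max}$ (using $L_{\max}\sim\max(L_{\med},N_1N_2)$): in the cases $L_{\max}\in\{L_1,L_2\}$ use \eqref{ppm1}, in the case $L_3=L_{\max}$ with $N_1\sim N_2\sim N_3$ use \eqref{ppm4}, and otherwise use \eqref{ppm3}. Since here we are paying $N_3^{\be-\f12}$ rather than $N_3^{\be+\f12-\de}$ as in Lemma~\ref{tmap3}, the summability in the high frequencies is in fact easier, and the condition $10\de<1-\al-\be$ is what makes the borderline $N_1\sim N_2\sim N_3$ sum converge (the exponent there works out to something like $-(1-\al-\be)+O(\de)<0$).

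The main obstacle, as usual in these arguments, is the resonant case $N_1\sim N_2\sim N_3\sim N$ with $L_3 = L_{\max}\sim N^2$: this is where the high-high-to-comparable interaction is worst and where the $\be-\f12$ gain on $N_3$ must be combined carefully with the $L_{\min}^{1/2}L_{\med}^{1/4}$ bound from \eqref{ppm4} and the $L^{-\f12-\de}$ weights coming from the $X^{\f12+\de}$ norms of $u,v$ and the $X^{-\f12+\de}$ norm of the test function. I expect that after summing the $L$'s one is left with $\sum_N N^{2\de - (1-\al-\be)}$ or similar, which converges precisely under the hypothesis $10\de<1-\al-\be$; the other cases are strictly better. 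One bookkeeping point to be careful about: the $+0$ frequency cutoffs $u_{>0},v_{>0}$ only restrict $\xi_1,\xi_2>0$, so $N_3\lesssim N_1+N_2$ but $N_3$ can be much smaller than $N_{\max}$, and in that low-$N_3$ regime the factor $N_3^{\be-\f12}$ is large; one must then use that $N_1\sim N_2\sim N_{\max}$ and that $N_1^{\f12}/N_{\max}^{\f12}$ (from \eqref{ppm3}) together with the $L$-summation still beats it — again the $1-\al-\be>0$ room suffices.
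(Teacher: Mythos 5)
Your overall architecture matches the paper's: reduce via Proposition~\ref{xsb} to the bilinear bound $\|G(u_{>0},v_{>0})\|_{X^{\al-\f12,-\f12+\de}}\lesssim \|u\|_{X^{\al-\f12,\f12+\de}}\|v\|_{X^{0,\f12+\de}}$, dualize, and run a dyadic case analysis with the weight $N_1^{\f12}N_2^{\al}N_3^{\be-\f12}L_1^{-\f12-\de}L_2^{-\f12-\de}L_3^{-\f12+\de}$ against $\|\chi^{(+,+,-)}\|_{\mathcal M}$. However, there is a genuine gap in your case assignment: you propose to use \eqref{ppm1} whenever $L_{\max}\in\{L_1,L_2\}$, and you assert that these cases are "strictly better" than the resonant one. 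That fails in the high-high-to-low configuration $N_1\sim N_2\sim N\gg N_3$ with, say, $L_1\sim L_{\max}\sim N_1N_2\sim N^2$ and $L_2,L_3\sim 1$ (which is admissible, since $P_{>0}$ does not fix the sign of the frequencies, so $\xi_1\approx-\xi_2$ is allowed). There \eqref{ppm1} gives $L_{\min}^{\f12}N_{\min}^{\f12}=L_{\min}^{\f12}N_3^{\f12}$, and after summing the $L$'s and $N_3$ you are left with roughly $N^{\f12+\al}\cdot N^{-1-2\de}\cdot N_3^{\be}\lesssim N^{\al+\be-\f12-2\de}$, which diverges for every $\al>\f12-\be$ — in particular throughout the range $\al\in(\f12,1-\be)$ where the lemma is used. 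The factor $N_{\min}^{\f12}$ points the wrong way: what is needed is a factor $N_1^{-\f12}$ (or $N_2^{-\f12}$) to cancel the $N_1^{\f12}$ weight coming from measuring $u$ only in $X^{\al-\f12,\cdot}$.

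This is exactly why the paper singles out and proves the estimate \eqref{ppm2}, $\|\chi^{(+,+,-)}\|_{\mathcal M}\leq C\min\bigl(L_1L_3/N_2,\;L_2L_3/N_1\bigr)^{\f12}$, which it describes as not explicit in \cite{Tao1} but "crucial in the sequel": when $L_1=L_{\max}\sim N_1N_2$ one uses $L_2^{\f12}L_3^{\f12}N_1^{-\f12}$, and the $N_1^{-\f12}$ kills the bad weight while $L_2^{\f12}L_3^{\f12}$ absorbs the two small modulations. The correct split is therefore: (i) $L_{\max}\sim L_{\med}\gg N_1N_2$, where \eqref{ppm1} does suffice because two large $L$'s sit in the denominator; (ii) $L_{\max}\sim N_1N_2$ with $L_1$ or $L_2$ maximal, where \eqref{ppm2} is needed; (iii) $L_3=L_{\max}$, where \eqref{ppm4} or \eqref{ppm3} apply as you say. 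Note also that your appeal to the $N_{\max}^{-\f12}$ in \eqref{ppm3} for the low-$N_3$ regime is not available when $L_1$ or $L_2$ is maximal, since item (3) of Proposition~\ref{ppm} assumes $L_3=L_{\max}$; the needed $N^{-\f12}$ must come from \eqref{ppm2}. Finally, the case you flag as the main obstacle ($N_1\sim N_2\sim N_3$, $L_3=L_{\max}\sim N^2$) is in fact routine here: \eqref{ppm4} yields $\sum_N N^{\al+\be-1+2\de}<\infty$ directly.
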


\begin{proof}
By using the partition of unity $\chi^{(+, +, -)}$, we can localize spacial and time frequencies to their respective indices.  (Here we localize $u, v, uv$ respectively to $N_1, N_2, N_3$.)  Also denote by the symbol~$\sum$ to be the summation over $N_1, N_2, N_3, L_1, L_2, L_3 \geq 1$. \\
  
Applying Proposition~\ref{xsb}, we obtain
\begin{align}
\|z\|_{X_T^{\al- \f{1}{2}, \frac{1}{2}+\delta}} &\leq C_{T,\delta} \|G(u_{>0}, v_{>0})\|_{X^{\al - \frac{1}{2}, -\frac{1}{2}+\delta}}\notag\\
	&\leq C_{T,\delta} \sum \frac{N_1^{\alpha} N_2^{\alpha}}{N_3^{\frac{1}{2}-\be}} \|u\, v\|_{X^{0,-\frac{1}{2}+\delta}}\notag\\
	&\leq C_{T,\delta} \sum \frac{N_1^{\alpha} N_2^{\alpha}}{N_3^{ \frac{1}{2}-\be}}  \sup_{\|w\|_{X^{0, \frac{1}{2}-\delta}_{\tau=-\xi^2}}=1} \left| \int_{\rone\times \rone} u (t,x) v (t,x) w (t,x) \,dt\,dx\right|\notag\\
	&\leq C_{T,\delta} \sum \frac{N_1^{\alpha} N_2^{\alpha}}{N_3^{\f{1}{2}-\be}}  \sup_{\|w\|_{X^{0, \frac{1}{2}-\delta}_{\tau=-\xi^2}}=1} \|\chi^{(+,+,-)}\|_{\mathcal{M}} \|u\|_{L^2_{t,x}} \|v\|_{L^2_{t,x}} \|w\|_{L^2_{t,x}}\notag\\
	&\leq C_{T,\delta} \|u\|_{X^{\al-\f{1}{2},\frac{1}{2}+\delta}} \|v\|_{X^{0,\frac{1}{2}+\delta}} \sum \frac{N_1^{\f{1}{2}} N_2^{\alpha}}{N_3^{\frac{1}{2}-\be} L_1^{\frac{1}{2}+\delta} L_2^{\frac{1}{2}+\delta} L_3^{\frac{1}{2}-\delta}} \|\chi^{(+,+,-)}\|_{\mathcal{M}} \label{w1}
\end{align}

Now we refer to Proposition~\ref{ppm} for appropriate bounds.  If we have $L_{\max} \sim L_{\med} \gg N_1 N_2$,  we apply \eqref{ppm1}.  Then the sum in \eqref{w1} is estimated by
\begin{align*}
 \sum \frac{N_1^{\f{1}{2}} N_2^{\alpha}}{N_3^{\frac{1}{2}-\be} L_1^{\frac{1}{2}+\delta} L_2^{\frac{1}{2}+\delta} L_3^{\frac{1}{2}-\delta}}  N_{\min}^{\frac{1}{2}} L_{\min}^{\frac{1}{2}}
	&\leq \sum \frac{N_1^{\f{1}{2}} N_2^{\alpha} N_{\min}^{\be}}{L_{\max} } \leq \sum \frac{1}{L_{\max}^{1-\alpha-\beta}} \leq C_{\alpha,\beta}
\end{align*}	

Otherwise, we may assume that $L_{\max} \sim N_1 N_2$.\\

If $L_1=L_{\max}$, we apply $\displaystyle \|\chi^{(+,+,-)}\|_{\mathcal{M}} \leq C \frac{L_2^{\frac{1}{2}} L_3^{\frac{1}{2}}}{N_1^{\frac{1}{2}}}$.  If $L_2 = L_{\max}$, then we can use $\displaystyle \|\chi^{(+,+,-)}\|_{\mathcal{M}} \leq C \frac{L_1^{\frac{1}{2}} L_3^{\frac{1}{2}}}{N_2^{\frac{1}{2}}}$.\\

Both cases will work out similarly.  For instance, if $L_1 = L_{\max}$, then the sum in \eqref{w1} is estimated
\begin{align*}
 \sum \frac{N_1^{\f{1}{2}} N_2^{\alpha}}{N_3^{\frac{1}{2}-\be} L_1^{\frac{1}{2}+\delta} L_2^{\frac{1}{2}+\delta} L_3^{\frac{1}{2}-\delta}}  \frac{L_2^{\frac{1}{2}} L_3^{\frac{1}{2}}}{N_1^{\frac{1}{2}}}
	&\leq  \sum \frac{ N_2^{\alpha}}{N_3^{\frac{1}{2}-\be} L_{\max}^{\f{1}{2}+\delta}}  \leq \sum \frac{1}{L_{\max}^{\delta}} = C_{\de}.
\end{align*}

If $L_3 = L_{\max}$ and $N_1\sim N_2 \sim N_3 \sim N$, then we can estimate the sum in \eqref{w1} via \eqref{ppm4}.
\begin{align*}
 \sum \frac{ N^{\alpha+\beta}}{L_{\max}^{\frac{1}{2}-\delta}} 
	&\leq \sum N^{\alpha+\beta -1 +2\delta} \leq C_{\al,\be, \de}.
\end{align*}

Otherwise, \eqref{ppm3} can be used to estimate the sum in \eqref{w1}
\begin{align*}
\sum \frac{N_1^{\f{1}{2}} N_2^{\alpha-\f{1}{2}}}{N_3^{\f{1}{2}-\be} L_{\max}^{\frac{1}{2}-\delta}} 	&\leq  \sum \frac{1}{ L_{\max}^{\delta}} \leq C_{\de}.
\end{align*}
\end{proof}
The next lemma deals with right-hand sides of the form $G(h,h)$. 

\begin{lemma}
\label{le:6}
Let $z$ satisfy $(\p_t+i\p_x^2) z = G(u_{>0}, v_{>0})$ with $z(0,x) \equiv 0$.  Then for $0<\delta <1-\alpha-\beta$,
\[
\|z\|_{X_T^{\al-\f{1}{2}, \frac{1}{2}+\delta}} \leq C_{T,\delta} 
\|u\|_{X^{1-\delta, \delta}} \|v\|_{X^{1-\delta,\delta}}.
\]
\end{lemma}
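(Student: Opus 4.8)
The plan is to run exactly the same scheme as in Lemma \ref{le:5}: apply Proposition~\ref{xsb} to reduce the estimate on $z$ to a bound on $\|G(u_{>0},v_{>0})\|_{X^{\al-\f12,-\f12+\de}}$, then dualize against $\|w\|_{X^{0,\f12-\de}_{\tau=-\xi^2}}=1$, insert the partition of unity $\chi^{(+,+,-)}$ (localizing $u,v,uv$ to $N_1,N_2,N_3$ and the modulations to $L_1,L_2,L_3$), and use $\|uv\|_{X^{0,-\f12+\de}}\le C\sup_w\|\chi^{(+,+,-)}\|_{\mathcal M}\|u\|_{L^2_{t,x}}\|v\|_{L^2_{t,x}}\|w\|_{L^2_{t,x}}$. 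The only change from Lemma \ref{le:5} is in how we convert the $L^2_{t,x}$ norms of the localized pieces back into $X^{s,b}$ norms: now each entry carries $1-\de$ spatial derivatives and only $\de$ modulation weight, so the surviving multiplier sum to be estimated is
\[
\sum \frac{N_1^{\al}N_2^{\al}}{N_3^{\f12-\be}}\cdot\frac{N_3^{\de}}{N_1^{1-\de}N_2^{1-\de}}\cdot\frac{L_3^{\de}}{L_1^{\de}L_2^{\de}L_3^{\f12-\de}}\,\|\chi^{(+,+,-)}\|_{\mathcal M},
\]
with the constraints $N_{\max}\sim N_{\med}$, $H\sim N_1 N_2$, $L_{\max}\sim\max(H,L_{\med})$. (Note $N_1^{\al-(1-\de)}=N_1^{\al-1+\de}$ has a \emph{negative} power of $N_1$ since $\al<1$, and similarly for $N_2$; this extra decay in the large frequencies is what makes the $G(h,h)$ term the mildest of the three.)

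The execution then splits into the same cases as before according to which of $L_1,L_2,L_3$ equals $L_{\max}$. When $L_{\max}=L_{\med}\gg N_1N_2$ one uses \eqref{ppm1}; the $N$-weight becomes $N_1^{\al-1+\de}N_2^{\al-1+\de}N_3^{\be-\f12+\de}N_{\min}^{\f12}\lesssim 1$ (using $\al<1$ and $N_3\lesssim N_{\max}$), and the remaining $L$-sum is controlled by a positive power of $L_{\max}$ in the denominator, as in Lemma \ref{le:5}. When $L_{\max}\sim N_1N_2$ and $L_1$ or $L_2$ is the max, one uses the $\min\!\big(\tfrac{L_2L_3}{N_1},\tfrac{L_1L_3}{N_2}\big)^{1/2}$ bound \eqref{ppm2} (or \eqref{ppm3}), gaining a power of $N_{\max}$ that more than absorbs $N_3^{\be-\f12}$ since $\be<1$, while the $L$-integrability is again immediate from the $L^{-\f12-}$ weights; the condition $\de<1-\al-\be$ is exactly what is needed to keep the net $N_{\max}$-exponent negative. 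The one delicate case is $L_3=L_{\max}$ with $N_1\sim N_2\sim N_3\sim N$: there $L_3\sim N^2$, and \eqref{ppm4} gives $L_{\min}^{1/2}L_{\med}^{1/4}$; the $N$-power works out to roughly $N^{2\al-2+\be-\f12+\text{(small)}+\f12}\cdot N^{2\de}=N^{2\al+\be-2+O(\de)}$, which is summable precisely because $2\al+\be<2$ — and this is where the hypothesis $0<\de<1-\al-\be$ (hence $\al+\be<1$, a fortiori $2\al+\be<2$) is used, just as in Lemma~\ref{le:5}.

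I expect no genuinely new obstacle here: the lemma is strictly easier than Lemma~\ref{le:5} because both inputs are measured in $X^{1-\de,\de}$ rather than one of them in the rougher $X^{0,\f12+\de}$, so the extra $(1-\de)$ derivatives on the high frequencies give room to spare. The only point requiring care is bookkeeping — tracking that the high-high-to-low interaction (where $N_3$ is small but $N_1\sim N_2$ large) really is handled, since there $N_3^{-\f12+\be}$ could grow if $N_3$ were large, but it cannot exceed $N_{\max}^{-\f12+\be}$ and is dominated by the decay $N_1^{\al-1+\de}N_2^{\al-1+\de}$; making this explicit in each subcase is the bulk of the (routine) work.
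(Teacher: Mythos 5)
Your proposal is correct and follows essentially the same route as the paper: reduce via Proposition~\ref{xsb}, dualize, insert the $\chi^{(+,+,-)}$ localization, and close the resulting dyadic sum with Tao's multiplier bounds. The paper's actual proof is even shorter — since both inputs carry $1-\de$ spatial derivatives, the single crude bound \eqref{ppm1} already closes the sum in every case, so the full case analysis you import from Lemma~\ref{le:5} (and the spurious extra factors $N_3^{\de}L_3^{\de}$ in your numerator, which only make the sum harder and would force a slightly smaller $\de$) can be dispensed with.
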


\begin{proof}
By using the partition of unity $\chi_{(+, +, -)}$, we can localize spacial and time frequencies to their respective indices.  (Here we localize $u, v, uv$ respectively to $N_1, N_2, N_3$.)  Also denote by the symbol~$\sum$ to be the summation over $N_1, N_2, N_3, L_1, L_2, L_3 \geq 1$. \\
  
Applying Proposition~\ref{xsb}, we obtain
\begin{align}
\|z\|_{X_T^{\al-\frac{1}{2}, \frac{1}{2}+\delta}} &\leq C_{T,\delta} \|G(u_{>0}, v_{>0})\|_{X^{\al-\f{1}{2}, -\frac{1}{2}+\delta}}\notag\\
	&\leq C_{T,\delta} \sum \frac{N_1^{\alpha} N_2^{\alpha}}{N_3^{\frac{1}{2}-\be}} \|u\, v\|_{X^{0,-\frac{1}{2}+\delta}}\notag\\
	&\leq C_{T,\delta} \sum \frac{N_1^{\alpha} N_2^{\alpha}}{N_3^{\frac{1}{2}-\be}}  \sup_{\|w\|_{X^{0, \frac{1}{2}-\delta}_{\tau=-\xi^2}}=1} \left| \int_{\rone\times \rone} u(t,x) v(t,x) w (t,x)\, dt\,dx\right|\notag\\
	&\leq C_{T,\delta} \sum \frac{N_1^{\alpha} N_2^{\alpha}}{N_3^{\frac{1}{2}-\be}}  \sup_{\|w\|_{X^{0, \frac{1}{2}-\delta}_{\tau=-\xi^2}}=1} \|\chi^{(+,+,-)}\|_{\mathcal{M}} \|u\|_{L^2_{t,x}} \|v\|_{L^2_{t,x}} \|w\|_{L^2_{t,x}}\notag\\
	&\leq C_{T,\delta} \|u\|_{X^{1-\delta,\delta}} \|v\|_{X^{1-\delta,\delta}} \sum \frac{1}{N_1^{1-\alpha} N_2^{1-\alpha} N_3^{\frac{1}{2}-\be} L_1^{\delta} L_2^{\delta} L_3^{\frac{1}{2}-\delta}}  \|\chi^{(+,+,-)}\|_{\mathcal{M}}\label{w2}
\end{align}

We refer to Proposition~\ref{ppm} for appropriate bounds.  We apply the bound~\eqref{ppm1} to the multiplier to estimate the sum in \eqref{w2} by
\begin{align*}
\sum  \frac{N_{\min}^{\frac{1}{2}} }{N_1^{1-\alpha} N_2^{1-\alpha} N_3^{ \frac{1}{2}-\be} L_{\max}^{\delta} } 	&\leq \sum  \frac{1}{ L_{\max}^{\delta}} \leq C_{\de}.
\end{align*}	

\end{proof}
Our next lemma treats all the terms on the right-hand side of \eqref{a:2} in 
 the form \\ $G(h,e^{-i t \p_x^2} f)=G(T(e^{-i t \p_x^2} f, e^{-i t \p_x^2} f), e^{-i t \p_x^2} f)$. 
Unfortunately, we cannot control such terms only 
with the {\it a posteriori} information $h\in \ch, e^{-i t \p_x^2} f\in X^{0,\f{1}{2}+\de}$. 
Instead, we must treat the whole expression as a trilinear one, which then 
yields the desired control. 
\begin{lemma}
\label{le:7}
Let $z$ satisfy $(\p_t+i\p_x^2) z = G(T(f,g), v_{>0})$ with $z(0,x) \equiv 0$, where $f,g\in X^{0,\frac{1}{2}+\delta}$.  Then for $0<10\delta<1-\alpha-\beta$,
\[
\|z\|_{X_T^{\al-\frac{1}{2}, \frac{1}{2}+\delta}} \leq C_{T,\delta} \|f\|_{X^{0,\frac{1}{2}+\delta}} \|g\|_{X^{0,\frac{1}{2}+\delta}} \|v\|_{X^{0,\frac{1}{2}+\delta}}.
\]
\end{lemma}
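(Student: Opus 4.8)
The plan is to reduce matters first, by Proposition~\ref{xsb} and the fact that $z(0,x)\equiv 0$, to the bound
\[
\|z\|_{X_T^{\al-\f{1}{2},\f{1}{2}+\de}}\leq C_{T,\de}\,\|G(T(f,g),v_{>0})\|_{X^{\al-\f{1}{2},-\f{1}{2}+\de}},
\]
and then to treat the right side as a genuinely \emph{trilinear} object by writing out the definition of $T$. On the Fourier side $G(T(f,g),v_{>0})$ is the trilinear operator with symbol
\[
m(\xi_1,\xi_2,\xi_3)=\f{\langle\xi_1\rangle^\al\langle\xi_2\rangle^\al\langle\xi_3\rangle^\al\,\langle\xi_1+\xi_2\rangle^\be}{\langle\xi_1+\xi_2+\xi_3\rangle^{\al-\be}\,\xi_1\xi_2}
\]
acting on $\widehat f(\xi_1),\widehat g(\xi_2),\widehat v(\xi_3)$, where $\xi_1,\xi_2>0$ (this is built into $T$) and $\xi_3>0$ (the $v_{>0}$ cutoff). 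Passing to duality — test against $w$ with $\|w\|_{X^{\f{1}{2}-\al,\f{1}{2}-\de}_{\tau=-\xi^2}}=1$ — and performing the standard Littlewood--Paley decomposition, localizing $f,g,v,w$ to spatial frequencies $N_1,N_2,N_3,N_4$ and to modulations $L_1,L_2,L_3,L_4$, we are reduced to bounding a four-linear sum over dyadic blocks.

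The reason one insists on this trilinear form, rather than using the \emph{a posteriori} information $T(f,g)\in\ch$, is that all three input frequencies are \emph{positive}. Hence there is no high-high-to-low cancellation inside $T$, so $\langle\xi_1+\xi_2\rangle\sim\max(N_1,N_2)$; likewise $\xi_1+\xi_2+\xi_3$ cannot be small, so $N_4\sim N_{\max}:=\max(N_1,N_2,N_3)$; and, most importantly, the modulation identity
\[
(\tau_1-\xi_1^2)+(\tau_2-\xi_2^2)+(\tau_3-\xi_3^2)+(\tau_4+\xi_4^2)=2(\xi_1\xi_2+\xi_1\xi_3+\xi_2\xi_3)
\]
forces $\max(L_1,L_2,L_3,L_4)\gtrsim N_{(1)}N_{(2)}$, the product of the two largest input frequencies. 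Using the $f\leftrightarrow g$ symmetry of $T$ we may take $N_1\geq N_2$, and then $m\lesssim N_1^{\al+\be-1}N_2^{\al-1}N_3^\al N_4^{\be-\al}$; so $f,g$ and the output each carry favorable (negative) powers, since $\al+\be<1$, and only $v$ carries a positive power $N_3^\al$.

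The four-linear dyadic integral is then estimated by reducing it to the bilinear $L^2$-multiplier norms already developed. Integrating out the intermediate variable $(\ze,\si)=(\xi_1+\xi_2,\tau_1+\tau_2)$ and applying Cauchy--Schwarz in $(\ze,\si)$ bounds each block by $\|f_{N_1}g_{N_2}\|_{L^2_{t,x}}\,\|v_{N_3}w_{N_4}\|_{L^2_{t,x}}$ on the relevant pieces, and each of these two factors is controlled by Propositions~\ref{prop:5} and~\ref{ppm}, Lemma~\ref{estimates}, Corollary~\ref{cor:a1}, and — in the high-high configurations — \eqref{plusminus}. Carrying along the modulation weights $L_j^{-\f{1}{2}-\de}$ for $j=1,2,3$ and $L_4^{-\f{1}{2}+\de}$, and recording which $L_j$ equals $L_{\max}$ (which by the identity above contributes $L_{\max}^{-\f{1}{2}+\de}\gtrsim(N_{(1)}N_{(2)})^{-\f{1}{2}+\de}$, or the stronger $(N_{(1)}N_{(2)})^{-\f{1}{2}-\de}$ when $L_{\max}$ sits on $f$, $g$ or $v$), one is left with a double dyadic sum in $(N_i)$ and $(L_i)$ whose net power of $N_{\max}$ is strictly negative once $\al<1-\be$ is used; the condition $10\de<1-\al-\be$ absorbs the $\de$-losses, including those generated by Corollary~\ref{cor:a1}, and the sum converges.

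The step I expect to be the main obstacle, and where the case analysis must be carried out with some care, is the configuration in which the high frequency is carried by $v$ rather than by $T(f,g)$, namely $N_3=N_{\max}\sim N_4\gg N_1\geq N_2$. There $T(f,g)$ sits at the comparatively low frequency $N_1$, and the symbol together with the $X^{\al-\f{1}{2}}$-regularity of $z$ produces the genuinely \emph{positive} power $N_3^{\al+\be-\f{1}{2}}$, so that one is forced to cash in the resonance gain $L_{\max}\gtrsim N_1N_3$ in full and to spend the negative powers $N_1^{\al+\be-1}N_2^{\al-1}$ coming from the $T$-symbol against it; it is precisely here that $\al<1-\be$ (and the budget $10\de<1-\al-\be$) is used to the hilt. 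The remaining configurations — $N_1\sim N_2\sim N_3$, or $N_1=N_{\max}$ with the various orderings of the $L_j$ — follow the pattern of Lemmas~\ref{le:5}, \ref{le:6} and~\ref{tmap3} and present no new difficulty.
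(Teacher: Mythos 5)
Your reduction to a four-linear dyadic sum, the observation that all three input frequencies are positive (so $N_4\sim N_{\max}$ and $\langle\xi_1+\xi_2\rangle\sim\max(N_1,N_2)$), and the resonance identity are all correct, and you have correctly located the dangerous configuration $N_3\sim N_4\gg N_1\geq N_2$. But the mechanism you commit to for estimating each block --- Cauchy--Schwarz in the intermediate variable, giving $\|f_{N_1}g_{N_2}\|_{L^2_{t,x}}\cdot\|v_{N_3}\overline{w}_{N_4}\|_{L^2_{t,x}}$, with each factor then controlled by the bilinear estimates of Lemma~\ref{estimates}, Corollary~\ref{cor:a1} and \eqref{plusminus} --- breaks down in exactly that configuration. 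There the product $v_{N_3}\overline{w}_{N_4}$ has both factors at positive frequency $\sim N$ and output frequency $\sim N_1\ll N$, and \emph{no} estimate of the form $\|(v_N\overline{w}_N)_{\ll N}\|_{L^2}\lesssim N^{-\f{1}{2}+}$ is available: the free-solution example $v=w=e^{-it\p_x^2}\phi$ with $\widehat\phi$ supported near $+N$ gives $\|\,|e^{-it\p_x^2}\phi|^2\|_{L^2_{t,x}}=\|e^{-it\p_x^2}\phi\|_{L^4}^2\sim\|\phi\|_{L^2}^2$ with no decay in $N$ (the product sits at modulation $\sim NN_1$ from \emph{both} parabolas, so the resonance gain cannot be harvested inside this block once the output modulation is unconstrained). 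Meanwhile $\|f_{N_1}g_{N_2}\|_{L^2}$ only gains $N_1^{-\f{1}{2}}$, in the \emph{low} frequency. The leftover power of $N$ is then $\al+\be-\f{1}{2}>0$ and the sum over $N$ diverges; your appeal to ``cashing in $L_{\max}\gtrsim N_1N_3$ in full'' cannot be executed with this pairing, since when $L_{\max}=L_4$ only $L_4^{-\f{1}{2}+\de}$ is available and it must be spent either inside the $v\overline{w}$ block (where, as above, it yields nothing beyond what Strichartz gives) or globally, but not both.

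The paper's proof circumvents this by a \emph{cross pairing} of the four factors. After discarding the outer Littlewood--Paley projection by replacing each entry $h$ with $Q[h]:=\cf^{-1}[|\hat h|]$ (which preserves $X^{s,b}$ norms and dominates the localized integral), it estimates the case $k\leq l-3$ by
$\|Q[f_{k_1}]\,Q[v_l]\|_{L^2_{t,x}}\cdot\|Q[g_{k_2}]\,Q[\overline{w}_{\sim l}]\|_{L^2_{t,x}}$,
so that each $L^2$ block contains one high-frequency and one low-frequency factor; \eqref{gain1} and \eqref{kkkk3} then give essentially a full derivative gain $2^{-l(1-O(\de))}$ in the \emph{high} frequency, which is what closes the sum. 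The same regrouping (combined with the $\pm$ decomposition \eqref{plusminus} for the pieces $(f^{+}_{\sim k}g^{+}_{\sim k})$, $(f^{+}_{\sim k}g^{-}_{\sim k})$, etc.) is also needed in the balanced case $k\sim l$ when $\overline{w}$ carries low frequency. This regrouping is the essential idea missing from your proposal; without it the stated $(fg)$--$(vw)$ pairing cannot produce the required negative power of $N_{\max}$.
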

{\bf Remark:} In addition, we have similar control for $z_1, z_2$, so that \\  
$(\p_t+i\p_x^2) z_1 = G(T(f,g)_{>0}, v_{>0})$, $(\p_t+i\p_x^2) z_2 = G(T(f,g)_{\leq 0}, v_{>0})$. Namely 
\begin{equation}
\label{a:20}
\|z_1\|_{X_T^{\al-\frac{1}{2}, \frac{1}{2}+\delta}} + 
\|z_2\|_{X_T^{\al-\frac{1}{2}, \frac{1}{2}+\delta}}\leq C_{T,\delta} \|f\|_{X^{0,\frac{1}{2}+\delta}} \|g\|_{X^{0,\frac{1}{2}+\delta}} \|v\|_{X^{0,\frac{1}{2}+\delta}}.
\end{equation}
\begin{proof}
We handle the term $z_1$ first, after which, we quickly indicate how to treat the general case for $z$ (and $z_2=z-z_1$ is subsequently controlled). 

Applying Proposition~\ref{xsb} and the duality relation~\eqref{dual2}, 
\begin{eqnarray*}
\|z_1\|_{X_T^{\al-\frac{1}{2}, \frac{1}{2}+\delta}} &\leq & C_{T,\delta} 
\|G(T(f,g)_{>0}, v_{>0})\|_{X^{\alpha-\frac{1}{2}, -\frac{1}{2}+\delta}}\notag\\
	&\leq & C_{T,\delta} \sum_{k,l>0}  2^{\alpha k +\alpha l }  
	\|T(f,g)_k v_l\|_{X^{\beta-\f{1}{2},-\frac{1}{2}+\delta}}\\
	&\leq & C_{T,\delta} \sum_{k,l>0} 2^{\alpha k +\alpha l }  
	\sup_{\|w\|_{X^{\f{1}{2}-\be,\frac{1}{2}-\delta}_{\tau=\xi^2}}=1} 
	\left|\int_{\rone \times \rone} T(f,g)_k v_l \overline{w} \, dt\, dx\right|.	 
\end{eqnarray*}

We split the last sum  into 
\[ \sum_{l\leq k-3} \cdot +  \sum_{k\leq l-3} \cdot + \sum_{ k\sim l} \cdot 
\]
where $k,l>0$.  We denote the corresponding terms 
by $I_1 + I_2 + I_3$.  On each summand, we will apply Lemma~\ref{estimates} to obtain the desired estimate.  We need to estimate the integral
\begin{equation}\label{int}
 2^{\alpha k +\alpha l} \left|\int_{\rone \times \rone} T(f,g)_k v_l \overline{w} \, dt\, dx\right|.
\end{equation}

For $I_1$, we have high-low interaction between $T(f,g)_k$ and $v_l$ and hence \\ 
$T(f,g)_k v_l=P_{\sim k}[T(f,g)_k v_l]$.  Hence it suffices to control 
$$
2^{2 \alpha k} \left|\int_{\rone \times \rone} T(f,g)_k v_l \overline{w}_{\sim k} \, dt\, dx\right|.
$$
Thus 
 \begin{align*}
\eqref{int} &\leq C 2^{ 2 \alpha k}  \sum_{k_1,k_2>0}   \left|\int_{\rone\times \rone} T(f_{k_1}, g_{k_2})_k v_l \overline{w}_{\sim k} \, dt\,dx \right|\\
	 &\leq C 2^{(\alpha+\beta) k} \sum_{k_1,k_2>0}   2^{(\alpha-1)(k_1+k_2)} \left|\int_{\rone\times \rone} (f_{k_1} g_{k_2})_k v_l \overline{w}_{\sim k} \, dt\,dx \right|\\
	&\leq C 2^{ (\alpha+\beta) k} \sum_{k_1,k_2>0}   2^{(\alpha-1)(k_1+k_2)}
	\|(f_{k_1} g_{k_2})_k\|_{L^2_{t,x}}  \|v_l \overline{w}_{\sim k} \|_{L^2_{t,x}}. 
\end{align*}

Here we need to consider two cases: first when $2^{k_1} \sim 2^{k_2} \sim 2^k$, and second when this does not take place.  In the first case, we obtain
\begin{align*}
\eqref{int} &\leq C 2^{ (3\alpha+\beta-2) k}
	\|(f_{\sim k} g_{\sim k})_k\|_{L^2_{t,x}}  \|v_l \overline{w}_{\sim k} \|_{L^2_{t,x}}\\
&\leq  C_{\al, T,\delta} 2^{ (3\alpha+\beta-2) k}\|f\|_{L^8_T L^4_x} 
	\|g\|_{L^8_T L^4_x}  \|\overline{v}_l w_{\sim k} \|_{L^2_{t,x}}\\
	&\leq C_{\al, \delta,T} 2^{(3\alpha + 2\beta -3 +5\de)k }  \|f\|_{X^{0,\frac{1}{2}+\delta}} 
	\|g\|_{X^{0,\frac{1}{2}+\delta}}  \|v\|_{X^{0,\frac{1}{2}+\delta}} \|w\|_{X^{\f{1}{2}-\beta,\frac{1}{2}-\delta}}.
\end{align*}
where we have used the H\"older's inequality, the fact that $q=8, r=4$ is a Strichartz pair 
(and hence $\|f\|_{L^8_T L^4_x} \leq C_\de ||f||_{X^{0,\frac{1}{2}+\delta}}$) and the almost half a derivative gain that we derive from the estimates \eqref{kkk1} and \eqref{kkk2}.   On the other hand, if  $2^{k_1} \sim 2^{k_2} \sim 2^k$ does not take place, then we can gain $\f{1}{2}$ derivative from $\|(f_{k_1} g_{k_2})_{k}\|_{L^2}$ via \eqref{gain1} or \eqref{gain2}.  Thus the estimate follows
 \begin{align*}
\eqref{int}	&\leq C 2^{ (\alpha+\beta) k} \sum_{k_1,k_2>0} 2^{(\al-1)(k_1+k_2)}
	\|(f_{k_1} g_{k_2})_k\|_{L^2_{t,x}}  \|v_l \overline{w}_{\sim k} \|_{L^2_{t,x}}\\
	&\leq  C_{\al, \delta,T} 2^{(\alpha + \beta -1 +6\de)k }  \|f\|_{X^{0,\frac{1}{2}+\delta}} 
	\|g\|_{X^{0,\frac{1}{2}+\delta}}  \|v\|_{X^{0,\frac{1}{2}+\delta}} \|w\|_{X^{0,\frac{1}{2}-\delta}}.
\end{align*}
Clearly in both cases, the sums in $k>0$ goes through since $0<10\de<1-\al - \be$ and hence the bound \eqref{a:20} for this portion of the sum.\\

For $I_2$, note that now $T(f,g)_k v_l=P_{\sim l}[T(f,g)_k v_l]$ and hence 

 \begin{align}
\eqref{int} &\leq C 2^{\alpha (k+l)}\sum_{k_1,k_2>0}   \left|\int_{\rone\times \rone} T(f_{k_1}, g_{k_2})_k v_l \overline{w}_{\sim l} \, dt\,dx \right| \notag\\
	 &\leq C 2^{ \beta k + \alpha l }\sum_{k_1,k_2>0}   2^{(\alpha-1)(k_1+k_2)} \left|\int_{\rone\times \rone} (f_{k_1} g_{k_2})_k v_l \overline{w}_{\sim l} \, dt\,dx \right|. \label{w5}
\end{align}

If $2^{k_1} \sim 2^{k_2} \sim 2^l$,
\begin{align*}
\eqref{w5} &\leq C  2^{ (3\alpha+\beta-2)l} \|(f_{\sim l} g_{\sim l})_k\|_{L^2_{t,x}} \|v_l \overline{w}_{\sim l}\|_{L^2_{t,x}} \\
	&\leq C_{T,\de} 2^{(3\alpha +2\beta -3+6\de)l}  \|f\|_{X^{0,\frac{1}{2}+\delta}} 
	\|g\|_{X^{0,\frac{1}{2}+\delta}}  \|v\|_{X^{0,\frac{1}{2}+\delta}} \|w\|_{X^{\f{1}{2}-\beta,\frac{1}{2}-\delta}}.
\end{align*}

If $2^{k_1}\sim 2^{k_2}\sim 2^l$ does not hold, say $|k_1 -l|>6$.  Then because of the restriction $P_k (f_{k_1} g_{k_2})$ and $k \leq l-3$, it is not possible that $|k_2 - l|< 5$.    So in this case,  we would like to estimate the last integral by something close to 
$\|f_{k_1}v_l\|_{L^2_{t, x}} \|g_{k_2}\overline{w}_{\sim l} \|_{L^2_{t,x}}$ which would give us a full derivative gain in $l$. However, we cannot quite do that, since the integral in \eqref{w5} is not a pointwise product, but rather 
 the operator $P_k$ acting on $f_{k_1} g_{k_2}$, which then is multiplied by 
 $v_l\overline{w}_l$. \\
 
 The following 
calculation however provides a substitute for this, namely by Plancherel's and triangle inequality 
\begin{eqnarray*}
\left|\int_{\rone\times \rone} (f_{k_1} g_{k_2})_k v_l \overline{w}_{\sim l} \, dt\,dx \right|
& \leq &  
 \int_{\rone\times \rone} |\widehat{f_{k_1}}*\widehat{g_{k_2}}(\xi)|
\vp(2^{-k} \xi)|[ \widehat{v_l}* \widehat{\overline{w}_{\sim l}}](-\xi)| d\xi \, dt   \\
&\leq & \left|\int_{\rone\times \rone} Q[f_{k_1}] Q[g_{k_2}] Q[v_l] Q[\overline{w}_{\sim l}] dx\, dt\right|,
\end{eqnarray*}
where $Q[h]:=\cf^{-1}[|\hat{h}|]$. Note that $Q[h]_k=Q[h_k]$ and 
$\|Q[h]\|_{X^{s,b}}=\|h\|_{X^{s,b}}$, 
by the definition of $\|\cdot\|_{X^{s,b}}$. In other words, we have managed to remove the Littlewood-Paley operator $P_k$ (and to reduce to an expression as an integral 
of  pointwise product of four functions), at the expense of introducing the operators $Q$, which do not really affect the $X^{s,b}$  norms of the entries. 
With that last reduction in mind, we continue our estimation of \eqref{w5}.  

 ,  we have by \eqref{gain1} and \eqref{kkkk3} respectively 
\begin{eqnarray*}
\eqref{w5} &\leq & C \sum_{k_1,k_2>0} 2^{\beta k+ \alpha l +(\alpha-1)(k_1+k_2)} 
\|Q[f_{k_1}] Q[v_l]\|_{L^2_{t,x}} \|Q[g_{k_2}] Q[\overline{w}_l]\|_{L^2_{t,x}}\\
	&\leq & C_{T,\delta}  2^{\left(\alpha+\beta -1+6\delta\right)l } 
	\|f\|_{X^{0,\frac{1}{2}+\delta}}  \|v\|_{X^{0,\frac{1}{2}+\delta}}  
	\|g\|_{X^{0,\f{1}{2}+\de}} \|w\|_{X^{0,\f{1}{2}-\de}}. 
\end{eqnarray*}
 This clearly sums in $l$, provided $0<10\de < 1-\al-\be$. \\

For $I_3$, we have 

\begin{align}
\eqref{int} &\leq  C \sum_{k_1,k_2>0} 2^{(\alpha+\beta) k +(\alpha-1)(k_1+k_2)} \left|\int_{\rone\times \rone} (f_{k_1} g_{k_2})_k (v_{\sim k} \overline{w})_{\sim k} \, dt\,dx \right|. \label{w6}
\end{align}
At this point, let us discuss the frequency localization for $\overline{w}$. Clearly,   
\begin{equation}
\label{w66}
(v_{\sim k} \overline{w})_{\sim k}=(v_{\sim k} \overline{w}_{<k+3})_{\sim k}= 
(v_{\sim k} \overline{w}_{\sim k})_{\sim k}+ (v_{\sim k} \overline{w}_{\ll k})_{\sim k}, 
\end{equation} 
In particular, $ \overline{w}$ may not be high frequency. \\

If $|k_1-k|\geq 3$ or $|k_2-k| \geq 3$, then by \eqref{gain1} or \eqref{gain2} (applied to 
$\|(f_{k_1} g_{k_2})_k\|_{L^2_{t,x}}$)  and either \eqref{kkkk1} or \eqref{kkkk2} (applied to $\|(v_{\sim k} \overline{w})_{\sim k}\|_{L^2}$) 
\begin{align*}
\eqref{w6} &\leq C \sum_{k_1,k_2>0} 2^{(\alpha +\beta)k +(\alpha-1)(k_1+k_2)} 
\|(f_{k_1} g_{k_2})_k\|_{L^2_{t,x}}
(\|(v_{\sim k} \overline{w}_{\sim k})_{\sim k}\|_{L^2}+ 
\|(v_{\sim k} \overline{w}_{\ll k})_{\sim k})\|_{L^2})\\
	&\leq C_{\delta}  2^{\left(\alpha+\beta-1+6 \delta\right)k } \|f\|_{X^{0,\frac{1}{2}+\delta}}  \|g\|_{X^{0,\frac{1}{2}+\delta}}  
	\|v\|_{X^{0,\frac{1}{2}+\delta}} \|w\|_{X^{0,\frac{1}{2}-\delta}}.
\end{align*}
Summing in $k$ yields a bound, provided $0<10\de <1-\al-be$. 

Otherwise, $2^{k_1} \sim 2^{k_2}\sim 2^k$ and sums in $k_1, k_2$ are over a finite set.  In this case, we first handle the $w_{\sim k}$ term, which is easier due to the gain of $\f{1}{2}-\be$ derivatives in $k$.
\begin{align*}
\eqref{w6} &\leq C  2^{(3\alpha +\beta -2)k} \left|\int_{\rone\times \rone} (f_{\sim k} g_{\sim k})_k (v_{\sim k} \overline{w}_{\sim k})_{\sim k} \, dt\,dx \right|\\
	&\leq C 2^{(3\al+2\beta -3+5\de)  k} \|f\|_{X^{0,\frac{1}{2}+\delta}}  
	\|g\|_{X^{0,\frac{1}{2}+\delta}} \|v\|_{X^{0,\frac{1}{2}+\delta}} \|w\|_{X^{\al-\f{1}{2},\frac{1}{2}-\delta}}.
\end{align*}

For the term with $w_{\ll k}$, we need a more refined analysis, which is possible thanks to the estimate~\eqref{plusminus}.  We can write  
\begin{equation}\label{plusm}
(f_{\sim k} g_{\sim k})_k = (f_{\sim k}^+ g_{\sim k}^-)_k +(f_{\sim k}^- g_{\sim k}^+)_k +(f_{\sim k}^+ g_{\sim k}^+)_k^+ + (f_{\sim k}^- g_{\sim k}^-)_k^-.
\end{equation}

For the first two terms, due to \eqref{plusminus} and 
either \eqref{kkkk1} or \eqref{kkkk2} (applied respectively to the two terms arising in \eqref{w66}), we obtain 
\begin{align*}
\eqref{w6} &\leq C  2^{(\alpha +\beta) k } 
\|(f_{\sim k}^+ g_{\sim k}^-)_k\|_{L^2_{t,x}} 
\|(v_{\sim k} \overline{w}_{\ll k})_{\sim k}\|_{L^2_{t,x}}\\
	&\leq  C_\delta  2^{\left(\alpha+\beta-1+6 \delta\right)k } \|f\|_{X^{0,\frac{1}{2}+\delta}}  \|g\|_{X^{0,\frac{1}{2}+\delta}}  \|v\|_{X^{0,\frac{1}{2}+\delta}} 
	\|w\|_{X^{0,\frac{1}{2}-\delta}}.
\end{align*}

To deal with the next two terms in \eqref{plusm}, note that the integral in \eqref{w6} is in the form $\int (f_k^+ g_k^+)^+ (v_k^- \overline{w}_{\ll k} )_k^-$ or $\int (f_k^- g_k^-)^- (v_k^+ \overline{w}_{\ll k} )_k^+$.  We  estimate  the first one, the second one being symmetrically equivalent to the first. We need to once again apply the bounds involving the operator $Q$. 
\begin{align*}
\eqref{w6} &\leq C 2^{(\alpha+\beta)k} \left|\int_{\rone\times \rone} (f_{\sim k}^+ g_{\sim k}^+)_k (v_{\sim k}^- \overline{w}_{\ll k})_{\sim k}^- \, dt\,dx \right|\\
&\leq C  2^{(\alpha +\beta)k}  \left|\int_{\rone\times \rone} Q[f_{\sim k}^+] Q[g_{\sim k}^+] Q[v_{\sim k}^-] Q[\overline{w}_{\ll k}] dt\,dx \right|\\ 
	&\leq C  2^{(\alpha+\beta) k} 
	\|Q[f_{\sim k}^+] Q[v_{\sim k}^-]\|_{L^2_{t,x}} \|Q[g_{\sim k}^+] 
	Q[\overline{w}_{\ll k}]\|_{L^2_{t,x}}\\
	&\leq  C_\delta  2^{\left(\alpha+\beta-1+6\delta\right)k } \|f\|_{X^{0,\frac{1}{2}+\delta}}  \|g\|_{X^{0,\frac{1}{2}+\delta}}  \|v\|_{X^{0,\frac{1}{2}+\delta}} 
	\|w\|_{X^{0,\frac{1}{2}-\delta}}
\end{align*}
where we have applied \eqref{gain1} and  estimate \eqref{plusminus} for $\|Q[f_{\sim k}^+] Q[v_{\sim k}^-]\|_{L^2_{t,x}}$ and \eqref{kkkk2} for \\ $\|Q[g_{\sim k}^+] 
	Q[\overline{w}_{\ll k}]\|_{L^2_{t,x}}$. \\

Now to prove the same estimate for $z_2$, it is clear that the sums $I_1$ and $I_3$ do not appear (or is finite), since $k<0$ and $l>0$.  So we need to regard the sum of type $I_2$.  But here, instead of partitioning $(0,1)$ into sets of form $(2^{k-1}, 2^k)$, we will use a single characteristic function $\chi_{[0,1]}$ to represent the case when $k<0$ (thus we do not need to worry whether it is summable in $k$).  With the restriction $P_k (f_{k_1} g_{k_2})$, we must have $2^{k_1} \sim 2^{k_2}$.

 \begin{align}
\eqref{int} &\leq C \sum_{k_1>0} 2^{\alpha (k+l)}  \left|\int_{\rone\times \rone} T(f_{k_1}, g_{\sim k_1})_k v_l \overline{w}_{\sim l} \, dt\,dx \right| \notag\\
	 &\leq C \sum_{k_1 >0} 2^{\alpha l   +(2\alpha-2)k_1} \left|\int_{\rone\times \rone} (f_{k_1} g_{\sim k_1})_k v_l \overline{w}_{\sim l} \, dt\,dx \right|.\label{w7}
\end{align}

If $2^{k_1}\sim 2^l$,
\begin{align*}
\eqref{w7} &\leq C  2^{ \alpha l} \|(f_{\sim l} g_{\sim l})_k\|_{L^2_{t,x}} \|v_l \overline{w}_{\sim l}\|_{L^2_{t,x}} \\
	&\leq C_{T,\de} 2^{(\alpha  +\beta - 1+6\de)l}  \|f\|_{X^{0,\frac{1}{2}+\delta}} 
	\|g\|_{X^{0,\frac{1}{2}+\delta}}  \|v\|_{X^{0,\frac{1}{2}+\delta}} \|w\|_{X^{\f{1}{2}-\be,\frac{1}{2}-\delta}}.
\end{align*}

If $|k_1-l|>3$,

\begin{eqnarray*}
\eqref{w5} &\leq & C \sum_{k_1 >0} 2^{\alpha l +(2\alpha-2)(k_1)} 
\|Q[f_{k_1}] Q[v_l]\|_{L^2_{t,x}} \|Q[g_{\sim k_1}] Q[\overline{w}_l]\|_{L^2_{t,x}}\\
	&\leq & C_{T,\delta}  2^{\left(\alpha+\beta-1+6\delta\right)l } 
	\|f\|_{X^{0,\frac{1}{2}+\delta}}  \|v\|_{X^{0,\frac{1}{2}+\delta}}  
	\|g\|_{X^{0,\f{1}{2}+\de}} \|w_l\|_{X^{\f{1}{2}-\be,\f{1}{2}-\de}}. 
\end{eqnarray*}
\end{proof}

\subsection{Conclusion of the proof of Theorem \ref{theo:1}} 
\label{sec:s34}
Now that we have the needed multilinear estimates, we perform a 
fixed point argument for the solution  $w$ of \eqref{a:2}  in the space $\cx$. For simplicity, we group the terms on the right-hand side of \eqref{a:2} as follows\footnote{Recall that $G$ is a bilinear form}
\begin{eqnarray*}
  \cn_1 &=&  G([e^{-i t \p_x^2} f+w]_{\leq 0}, (Id+P_{> 0})[e^{-i t \p_x^2} f+h+w]); \\
 \cn_2 &=&  G(h_{\leq 0}, (Id+P_{> 0})[h+w])= \\ 
& =&   G(h_{\leq 0}, (Id+P_{> 0})[h])+
G(T(e^{-i t \p_x^2}f,e^{-i t \p_x^2}f)_{\leq 0}, (Id+P_{> 0})[w]);\\
 \cn_3 &=&  G(h_{\leq 0}, e^{-i t \p_x^2} (Id+P_{> 0})f)= 
 G(T(e^{-i t \p_x^2}f,e^{-i t \p_x^2}f)_{\leq 0},e^{-i t \p_x^2} (Id+P_{> 0})f);\\
\cn_4 &=& 2 G(e^{-i t \p_x^2} f_{>0}, h_{>0})=2 G(T(e^{-i t \p_x^2}f,e^{-i t \p_x^2}f)_{>0}, e^{-i t \p_x^2} f_{>0});  \\ 
 \cn_5  & = & 2 G(e^{-i t \p_x^2} f_{>0}, w_{>0});   \\
\cn_6 &=&  G(h_{>0}, h_{>0});\  \cn_7=G(h_{>0}, w_{>0})=G(T(e^{-i t \p_x^2}f,e^{-i t \p_x^2}f)_{>0}, w_{>0});\\
\cn_8 &=& G(w_{>0}, w_{>0}). 
\end{eqnarray*}
In order to finsh the proof, we need to show that  $w^0:=e^{-i t \p_x^2}[-T(f,f)]$ and \\
$\mw^j: (\p_t+i \p_x^2)\mw^j=\cn_j, \mw^j(0)=0,  j=1, \ldots, 8$, we have 
\begin{equation}
\label{a:101} 
\sum_{j=0}^8 \|\mw^j\|_{\cx}\leq C_{T, \de} (\|f\|_{L^2}+\|h\|_{\ch}+\|w\|_{\cx})^2(1+\|f\|_{L^2}+\|h\|_{\ch}+\|w\|_{\cx}). 
\end{equation}

\subsubsection{Estimates for $\mw^0$}
We have by Proposition \ref{xsb} and Lemma \ref{tmap1} (with $s=\al-\f{1}{2}<\f{1}{2}$) 
\begin{eqnarray*}
\|\mw^0\|_\cx= \|e^{-i t \p_x^2}[T(f,f)]\|_{X^{\al-\f{1}{2}, \f{1}{2}+\de}_T}\lesssim 
\|T(f,f)\|_{H^{\al-\f{1}{2}}}\lesssim \|f\|_{L^2}^2. 
\end{eqnarray*}
\subsubsection{Estimates for $\mw^1$}
Let us note first that for any two functions $\mu, \nu$,  \\
$
G(\mu_{\leq 0}, \nu)=
\langle \nabla \rangle^{\be-\al}(\langle \nabla \rangle^{\al} \mu_{\leq 0} \langle \nabla \rangle^{\al} \nu)
$ behaves for all practical purposes like $\mu_{\leq 0} \langle \nabla \rangle^{\be} \nu$. Thus, 
\begin{eqnarray*}
\|\mw^1\|_\cx &\leq & C_{T}  
\|G([e^{-i t \p_x^2} f+w]_{\leq 0},(Id+P_{> 0})[e^{-i t \p_x^2} f+h+w])\|_{L^2_t 
H^{\al-\f{1}{2}}_x}\\
&\leq &  C_{T}\|[e^{-i t \p_x^2} f+w]_{\leq 0}\cdot \langle \nabla \rangle^{\be} (Id+P_{> 0})[e^{-i t \p_x^2} f+h+w]\|_{L^2_t 
H^{\al-\f{1}{2}}_x}.
\end{eqnarray*}
By H\"older's inequality, we have 
$$
\|[e^{-i t \p_x^2} f+w]_{\leq 0}\cdot \langle \nabla \rangle^{\be} (Id+P_{> 0})[h]\|_{L^2_t 
H^{\al-\f{1}{2}}_x}\leq \|[e^{-i t \p_x^2} f+w]_{\leq 0}\|_{L^\infty_{t, x}} \|h\|_{L^2_t 
H^{\al+\be-\f{1}{2}}_x}.
$$
By the definition of $\ch$ however, $\ch\hookrightarrow X^{1-\de,\de} \hookrightarrow L^2_t H^{\al+\be-\f{1}{2}}_x$. Thus $\|h\|_{L^2_t 
H^{\al+\be-\f{1}{2}}_x}\leq C \|h\|_{\ch}$ and by Sobolev embedding and \eqref{a:1}
$$
\|[e^{-i t \p_x^2} f+w]_{\leq 0}\|_{L^\infty_{t, x}}\leq C 
\|e^{-i t \p_x^2} f+w\|_{L^\infty_{t} L^2_x}\leq C(\|f\|_{L^2}+\|w\|_{\cx}) 
$$
Regarding the remaining term in $\cn_1$, we can again split in two terms \\ 
$(Id+P_{> 0})[e^{-i t \p_x^2} f+w]=[e^{-i t \p_x^2} f+w]_{\leq 0}+
2 [e^{-i t \p_x^2} f+w]_{>0}$. The low frequency term is easy to deal with (by the argument above for $h$), whereas for the high-frequency term, we have by Lemma \ref{estimates} 
(more specifically \eqref{gain1}) that 
\begin{align*}
& \|[e^{-i t \p_x^2} f+w]_{\leq 0}\cdot \langle \nabla \rangle^{\be} [e^{-i t \p_x^2} f+w]_{> 0}\|_{L^2_t 
H^{\al-\f{1}{2}}_x}\\
&\leq  \|[e^{-i t \p_x^2} f+w]_{\leq 0}\|_{X^{0,\f{1}{2}+\de}}  
\sum_{k>0} 2^{-(\f{1}{2}-\de)k} 2^{\al+\be-\f{1}{2}k} \|[e^{-i t \p_x^2} f+w]_{k}\|_{X^{0,\f{1}{2}+\de}} \\
&\leq  C (\|f\|_{L^2}+\|w\|_{X^{0,\f{1}{2}+\de}})^2. 
\end{align*}

\subsubsection{Estimates for $\mw^2$}
Write $\mw^2=\mw^{2}_1+\mw^2_2$, where $\mw^2_1$ is the solution corresponding from the first term in $\cn_2$. Then,  
\begin{align*}
\|\mw^{2}_1\|_{\cx}&\leq C_T \|G(h_{\leq 0}, (Id+P_{> 0})[h])\|_{L^2_t H^{\al-\f{1}{2}}_x}\leq C \|h_{\leq 0}\|_{L^\infty_{t, x}} \|h\|_{L^2_t H^{\al-\f{1}{2}}_x}\\
&\leq C \|h_{\leq 0}\|_{L^{\infty}_t H^{\frac{1}{2}}_x} \|h\|_{\ch}\leq C \|h\|_{\ch}^2. 
\end{align*}
since $\ch\hookrightarrow L^2_t H^{\al-\f{1}{2}}_x \cap L^{\infty}_t H^{\frac{1}{2}}_x$. 

As far as $\mw^2_2$ is concerned, we apply Lemma \ref{le:7} (more precisely \eqref{a:20} for $z_2$), which yields 
$$
\|\mw^{2}_2\|_{\cx}\leq C_T \|e^{i t \p_x^2} f\|_{X^{0,\f{1}{2}+\de}}^2 
\|(Id+P_{>0})w\|_{X^{0,\f{1}{2}+\de}}\leq C \|f\|_{L^2}^2 \|w\|_\cx. 
$$
\subsubsection{Estimates for $\mw^3$}
The estimate for $\mw^3$ is pretty similar to the one for $\mw^2_2$. By \eqref{a:20}, applied for $z_2$,  
$$
\|\mw^3\|_{\cx}\leq C_T \|e^{-i t \p_x^2} f\|_{X^{0,\f{1}{2}+\de}}^2 
\|e^{-i t \p_x^2}[(Id+P_{>0})f]\|_{X^{0,\f{1}{2}+\de}}\leq C \|f\|_{L^2}^3. 
$$

\subsubsection{Estimates for $\mw^4$}
We have by \eqref{a:20}, applied for $z_1$,  that 
$$
\|\mw^4\|_\cx\leq C \|e^{-i t \p_x^2} f_{>0}\|_{X^{0,\f{1}{2}+\de}}^3 \leq C\|f\|_{L^2}^3. 
$$
\subsubsection{Estimates for $\mw^5$}

For $\mw^5$, we apply Lemma \ref{le:5}, whence 
$$
\|\mw^5\|_\cx\leq C \|w\|_{X^{\al-\f{1}{2},\f{1}{2}+\de}}
\|e^{-i t \p_x^2} f\|_{X^{0,\f{1}{2}+\de}}\leq C \|w\|_{\cx}\|f\|_{L^2}. 
$$
\subsubsection{Estimates for $\mw^6$}
The estimate for $\mw^6$ follows form Lemma \ref{le:6}, 
$$
\|\mw^6\|_\cx\leq C \|h\|_{X^{1-\de, \de}}^2\leq C \|h\|^2_\ch. 
$$
\subsubsection{Estimates for $\mw^7$}
This terms is in fact simpler than $\mw^4$ (since $w$ in the second component 
is in fact smoother than the free solution in $\cn_4$).  We deal with it in the same way. Namely, by \eqref{a:20}, applied to $z_1$, we have 
$$
\|\mw^7\|_\cx\leq C\|e^{-i t \p_x^2} f\|_{X^{0,\f{1}{2}+\de}}^2 \|w\|_{X^{0,\f{1}{2}+\de}}\leq 
C \|f\|_{L^2}^2 \|w\|_\cx. 
$$
 \subsubsection{Estimates for $\mw^8$}
Finally, the estimate for $w^8$ follows from Lemma \ref{le:5}. We have 
$$
\|\mw^8\|_\cx\leq C\|w\|_{X^{\al-\f{1}{2}, \f{1}{2}+\de}} \|w\|_{X^{0, \f{1}{2}+\de}}\leq 
C \|w\|_{\cx}^2
$$

 \section{Regarding l.w.p. for  nonlinearities of the form  $\nab^{\beta}[u \bar{u}]$ and $\nab^{\beta}[\bar{u}^2]$}
 \label{last}
 We will just briefly sketch the analysis that one needs to undertake, in order to pursue well-posedness of the problem 
 $$
 u_t+i u_{xx}= \nab^{\beta}[u \bar{u}].
 $$
As a byproduct of this discussion, we will hopefully be able to shed some light on the issue with low regularity, which is present in this particular case, \cite{LW}. 

Following the ideas of Section \ref{sec:s3}, we need to construct $T$, so that \eqref{8} is satisfied, where of course $G(u,v)=\nab^{\be-\al}[\nab^{\al} v \nab^{\al} \bar{v}]$. It is easy to see that the needed $T$ is in the form  
\begin{equation}
\label{s:40}
T(u,v)(x)= \f{1}{8\pi^2 i} \int 
\f{\langle\xi\rangle^\al\langle\eta\rangle^\al}{\langle\xi+\eta\rangle^{\al-\be}}
\f{1}{ \xi (\xi+\eta)} \widehat{u_{>0}}(\xi) \widehat{\bar{v}}(\eta) 
e^{i(\xi+\eta)x} d\xi 
d\eta.
\end{equation} 
Note that this transformation may be performed only when the output function $T(u,v)$ is Fourier localized, so that its frequency satisfies  
 $\gtrsim 1$, so that we do not run into trouble with the term $(\xi+\eta)^{-1}$ inside the symbol of $T$.  This is the reason why, in general (and unless we impose some homogeneous Sobolev norms in small frequencies, as is done in \cite{LW}),  we cannot do better than $H^{-\f{1}{4}+}$ l.w.p. 
 
 It is also clear from the form \eqref{s:40}, that in the case of ``high-high'' interactions, the (generally smoothing) term $(\xi+\eta)^{-1}$ is not of much help to achieve better smoothness of $T(u,v)$. Therefore, performing this transformation would be advantageous, only if $2\al<1$. This is a simple (if a little naive)  way to see the optimality of  restriction $\al<1/2$ in the results of \cite{LW}.  

For the nonlinearities of the form $\nab^{\beta}[\bar{u}^2]$, following the same ideas, we come up with the following normal form 
\begin{equation}
\label{s:50}
T(u,v)(x)= \f{1}{8\pi^2 i} \int 
\f{\langle\xi\rangle^\al\langle\eta\rangle^\al}{\langle\xi+\eta\rangle^{\al-\be}}
\f{1}{(\xi^2+\eta^2+\xi \eta)} \widehat{u}(\xi) \widehat{\bar{v}}(\eta) 
e^{i(\xi+\eta)x} d\xi 
d\eta.
\end{equation} 
Clearly, this normal form gains a derivative in each variable (very similar to the case $\nab^{\be}[u^2]$) and hence, one may expect to get an identical result to Theorem \ref{theo:1} for this nonlinearity as well. We will not pursue more details.

\end{document}